\newtheorem{thm}{Theorem}[section]
\newtheorem{lem}[thm]{Lemma}
\newtheorem{cor}[thm]{Corollary}
\newtheorem{prop}[thm]{Proposition}
\theoremstyle{remark}
\newtheorem{rem}[thm]{Remark}
\numberwithin{equation}{section}
\newcommand{\Starphi}{S_\phi^*}
\newcommand{\supp}{\operatorname{supp}}
\title[Selberg's Central Limit Theorem weighted by Linear Statistics of Zeta Zeros]{Selberg's Central Limit Theorem weighted \\ by Linear Statistics of Zeta Zeros}
\author{Alessandro Fazzari}
\address{D\'epartement de math\'ematiques et de statistique, Universit\'e de Montr\'eal. CP 6128, succ. Centre-ville. Montreal, QC H3C 3J7, Canada}
\email{alessandro.fazzari@umontreal.ca}
\author{Maxim Gerspach}
\address{Department of Mathematics, University of Goettingen, Bunsenstr. 3-5, 37073 Goettingen, Germany}
\email{maxim.gerspach@mathematik.uni-goettingen.de}
\author{Paolo Minelli}
\address{Institute for Analysis and Number Theory, TU-Graz, Steyrergasse 30, 8010 Graz, Austria}
\email{minelli@tugraz.at}
\date{\today}
\subjclass[2020]{Primary 11M06; Secondary 11M26.}
\begin{document}

\begin{abstract}
    We consider the value distribution of the logarithm of the Riemann zeta function on the critical line, weighted by the local statistics of zeta zeros. We show that, with appropriate normalization, it satisfies a complex Central Limit Theorem, provided that the Fourier support of the test function in the linear statistics is sufficiently small. 
    For the imaginary part, we extend this support condition up to its natural barrier under the Riemann Hypothesis. Finally, we prove that the correlation between $\log \zeta$ and the one-level density, while negligible on the level of Selberg's Central Limit Theorem, only decays at a rather slow rate if the Riemann Hypothesis is assumed. Our results can be viewed as a combination of Selberg's Central Limit Theorem with work of Hughes and Rudnick on mock-Gaussian behavior of the local statistics.
\end{abstract}

\maketitle

\section{Introduction}

Selberg's celebrated Central Limit Theorem states that the logarithm of the Riemann zeta function on the critical line, when suitably normalized, is approximately distributed like a complex Gaussian random variable. Specifically, if $\tau$ is sampled uniformly on an interval $[T,2T]$, we have convergence in distribution
\begin{equation}\label{5june.1}
    \frac{\log \zeta ( \tfrac12 + i \tau)}{\sqrt{\log \log T}} \overset{T \to \infty}{\longrightarrow} \mathcal{N}_{\mathbb{C}}(0,1),
\end{equation}
where $\mathcal{N}_{\mathbb{C}}(0,1)$ denotes 
a standard complex Gaussian. 
The works of Selberg \cite{Selberg, Selberg2} and Tsang \cite{Tsang1} provide full proofs of the Central Limit Theorem for the real and imaginary parts of log-zeta individually. A sketch of the proof of \eqref{5june.1} in the complex case can be found in \cite{Joyner}; see also \cite{Laurinchikas, BombieriHejhal} for further discussion and generalizations.

The classical approach to proving such a theorem is based on the method of moments. To establish \eqref{5june.1}, it suffices to show that for any fixed nonnegative integers $h,\ell$, we have
\begin{align*}
    \frac{1}{T} \int_T^{2T} \left(\log\zeta(\tfrac12+it)\right)^h  \left(\overline{\log\zeta(\tfrac12+it)}\right)^\ell dt
    =  \mathbbm{1}(h=\ell) \times h!(\log\log T)^h  + O((\log\log T)^{(h+\ell-1)/2}).
\end{align*}
More recent proofs of Selberg's Central Limit Theorem establish a convergence in distribution for the real part of $\log \zeta$ without requiring the computation of all moments; see \cite{BombieriHejhal,Laurinchikas2} and the more recent approach proposed by Radziwi{\l}{\l} and Soundararajan \cite{RadziwillSound}.

In modern terminology, a revealing way to understand why Selberg's Central Limit Theorem holds is via the so-called hybrid model for zeta. In \cite{GonekHughesKeating}, Gonek, Hughes, and Keating derive a formula that, under the Riemann Hypothesis, suggests an approximation of the type
\begin{align}\label{HybridModel}
    \zeta(\tfrac12+it) \approx 
    \prod_{p\leq X} \bigg(1-\frac{1}{p^{1/2+it}} \bigg)^{-1}
    \times \prod_{|\gamma-t|\leq \frac{1}{\log X}} \bigg(1-\frac{\frac12+it}{\frac12+i\gamma}\bigg)
\end{align}
where $\frac12+i\gamma$ runs over the nontrivial zeros of zeta. Thus, \eqref{HybridModel} expresses zeta as a product of a truncated Euler product and a truncated Hadamard product, with $X$ serving as a parameter that controls the truncation ranges. Upon taking the logarithm, the Gaussian nature emerges: the logarithm of the Euler product in \eqref{HybridModel} essentially consists of a sum of random variables $p^{-i\tau}$, weighted by $p^{-1/2}$. Since $p$ ranges over primes, the variables $p^{-i\tau}$ behave like independent, uniformly distributed points on the unit circle, and thus their sum tends toward a complex Gaussian by the Central Limit Theorem. The bulk of Selberg's work is then to show that the contribution from the zeros in \eqref{HybridModel} is negligible on average over $t$.

In light of \eqref{HybridModel}, primes alone do not capture the full nature of zeta; to reflect its complete structure, one also needs an understanding of its nontrivial zeros.
The seminal works of Montgomery \cite{Montgomery1} and Rudnick–Sarnak \cite{RudnickSarnak2} study the level correlations of zeros and reveal a striking connection with random matrix theory. The correlations of the zeros of zeta are conjectured to match those of the eigenvalues of large random unitary matrices. This phenomenon has been proven (partly under the Riemann Hypothesis) for a restricted class of test functions appearing in the definition of these correlations; see \cite{Montgomery1, RudnickSarnak}.

In \cite{HughesRudnick}, Hughes and Rudnick investigate the linear statistics of the nontrivial zeros of zeta. More precisely, 
denoting the zeros by $\frac12+i\gamma$ with $\gamma\in\mathbb C$
, they consider the centered moments of the quantity
$$ N_{\phi}(t) := \sum_{\gamma} \phi \left( \frac{\log T}{2\pi}(\gamma - t) \right), $$
where $\phi$ is a suitable test function with compactly supported Fourier transform.
One can view $N_\phi$ as a smooth
version of the counting function of zeros in intervals of length comparable to the mean spacing. Hughes and Rudnick prove that the first few moments of $N_\phi$ are asymptotic to those of a Gaussian random variable with mean $\hat\phi(0)$ and variance
\begin{align}\label{3june.1}
    \sigma_\phi^2 := \int_{-\infty}^{+\infty} \min(|u|,1) \hat\phi(u)^2 du.
\end{align}
Specifically, provided that $\hat\phi$ is supported in $(-2/m,2/m)$, they show that
\begin{align}\label{HRtheorem}
    \frac{1}{T} \int_{-\infty}^{+\infty} (N_\phi(t)-\hat\phi(0))^k \; \omega \left(\frac{t-T}{T} \right) dt 
    = \mu_k \sigma_\phi^{k} + o(1),
\end{align}
for every $k\leq m$. Here, $\omega$ is a nonnegative weight function of total mass 1 whose Fourier transform is compactly supported, and
\begin{align}\label{3june.2}
\mu_k := \begin{cases} (k-1)!! & \text{if } k \text{ even} \\ 0 & \text{if } k \text{ odd} \end{cases}
\end{align}
is the $k$-th moment of a standard real Gaussian. Note that the smoothing function $\omega$ in \eqref{HRtheorem} is not strictly necessary; the same result holds with the sharp cutoff $\omega = \mathbbm{1}_{[0,1]}$, as implied by \cref{mainthm} and \cref{RemarkHRsharp}.
According to \eqref{HRtheorem}, the first $m$ centered moments of $N_\phi$ agree with those of a Gaussian. The analogous result has also been proven in the random matrix theory setting by Hughes and Rudnick \cite{HughesRudnick2}, where they further show that higher moments are not Gaussian. 
This phenomenon, which is also expected for the Riemann zeta function, is referred to as mock-Gaussian behavior.

In this paper, we investigate the joint distribution of the logarithm of the Riemann zeta function and the linear statistics of its nontrivial zeros, thereby establishing a joint version of Selberg's Central Limit Theorem and Hughes-Rudnick's mock-Gaussian result. In other words, we study how the statistics of zeros near $t$ are influenced by weighting by $\log\zeta(\frac12+it)$. This captures the distribution of zeros around typical points, i.e. those governing the value distribution of log-zeta.

To this end, we adopt a setup similar to that in \cite{HughesRudnick}. Throughout the whole paper, we fix a function $\phi$ that satisfies the following conditions:
\begin{equation}\begin{split}\label{24may.1}
& \phi \text{ is even, real-valued on the real line, and not identically zero;} \\ 
& \hat \phi \text{ is smooth and compactly supported, with } \eta = \eta_\phi :=\inf\{a>0: \supp \hat\phi \subseteq (-a,a)\}. 
\end{split}\end{equation}
Note that, under these assumptions, $\hat\phi$ is real-valued on the real line.

Our main result provides an asymptotic formula for the joint moments of $\log\zeta$, its complex conjugate, and $N_\phi$.

\begin{thm}\label{mainthm}
    Let $T \ge 3$ be a (large) parameter and let $h,\ell,k$ be fixed nonnegative integers with $k$ even. 
    For any function $\phi$ that satisfies \eqref{24may.1} with $\supp \hat\phi \subseteq (-\frac{2}{k+2},\frac{2}{k+2})$, we have
    \begin{equation}\begin{split}\label{3june.3}
        \frac{1}{T} \int_T^{2T} &\left(\log\zeta(\tfrac{1}{2}+it)\right)^h \left(\overline{\log\zeta(\tfrac{1}{2}+it)} \right)^\ell \left(N_\phi(t)-\hat\phi(0)\right)^k dt \\
        &= \mathbbm{1}(h = \ell) \times \mu_k \sigma_\phi^k h! \left(\log\log T\right)^{h} + O \left( (\log\log T)^{(h + \ell -1)/2} \right),
    \end{split}\end{equation} 
    where $\sigma_\phi^2$ and $\mu_k$ are defined as in \eqref{3june.1} and \eqref{3june.2}, respectively.
\end{thm}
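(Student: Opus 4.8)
The plan is to reduce the joint moment to a moment of short Dirichlet polynomials over prime powers, isolate a diagonal main term factoring as a ``Selberg contribution'' times a ``Hughes--Rudnick contribution'', and control the off-diagonal. \emph{Step~1 (arithmetic forms).} By the explicit formula, in the shape used in \cite{HughesRudnick}, $N_\phi(t)-\hat\phi(0)=\widetilde N_\phi(t)+O(1/\log T)$ uniformly for $t\in[T,2T]$, where $\widetilde N_\phi(t):=-\tfrac{2}{\log T}\sum_{n\ge 2}\tfrac{\Lambda(n)}{\sqrt n}\hat\phi(\tfrac{\log n}{\log T})\cos(t\log n)$ is a real Dirichlet polynomial supported on prime powers $n\le T^\eta$. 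On the log-zeta side, Selberg's theory gives, for $X=T^\theta$ with $\theta=\theta(h,\ell,k)>0$ suitably small, a splitting $\log\zeta(\tfrac12+it)=P_X(t)+R_X(t)$ with $P_X(t)=\sum_{n\le X}\Lambda(n)n^{-1/2-it}/\log n$, such that $\tfrac1T\int_T^{2T}|R_X|^{2v}\,dt\ll_{v}1$ and $\tfrac1T\int_T^{2T}|P_X|^{m}\,dt\ll_m(\log\log T)^{m/2}$ for all fixed $v,m$.

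\emph{Step~2 (the swap --- the main obstacle).} Expanding $(\log\zeta)^h(\overline{\log\zeta})^\ell(N_\phi-\hat\phi(0))^k$ binomially about $P_X^h\overline{P_X}^\ell\widetilde N_\phi^{\,k}$, every remaining summand carries a factor of $R_X$ or of $E:=N_\phi-\hat\phi(0)-\widetilde N_\phi$; the $E$-terms contribute $O((\log T)^{-c})$, and each $R_X$-term is bounded by H\"older's inequality, grouping the $P_X$'s, $R_X$'s and $\widetilde N_\phi$'s. For a representative term $\tfrac1T\int P_X^{h-1}R_X\overline{P_X}^\ell\widetilde N_\phi^{\,k}\,dt$ this gives a bound $\ll(\log\log T)^{(h+\ell-1)/2}\,\|R_X\|_{s_2}\,\big(\tfrac1T\int|\widetilde N_\phi|^{ks_3}\big)^{1/s_3}$ with $\tfrac1{s_1}+\tfrac1{s_2}+\tfrac1{s_3}=1$; since $R_X$ is unbounded one must take $s_1,s_2<\infty$, hence $s_3>1$, so one needs an absolute moment of $\widetilde N_\phi$ of order strictly larger than $k$. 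As $k$ is \emph{even}, \eqref{HRtheorem} supplies this --- via $\tfrac1T\int|\widetilde N_\phi|^{k+1}\le\big(\tfrac1T\int\widetilde N_\phi^{\,k}\big)^{1/2}\big(\tfrac1T\int\widetilde N_\phi^{\,k+2}\big)^{1/2}\ll1$ --- precisely when $\hat\phi$ is supported in $(-\tfrac2{k+2},\tfrac2{k+2})$, which is exactly the hypothesis of \cref{mainthm}. Fitting all these error terms under $O((\log\log T)^{(h+\ell-1)/2})$ is, I expect, the crux; it leaves $\tfrac1T\int_T^{2T}P_X^h\overline{P_X}^\ell\widetilde N_\phi^{\,k}\,dt+O((\log\log T)^{(h+\ell-1)/2})$.

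\emph{Step~3 (diagonal main term).} Writing $\widetilde N_\phi(t)=-\tfrac1{\log T}\sum_n\tfrac{\Lambda(n)}{\sqrt n}\hat\phi(\tfrac{\log n}{\log T})(n^{it}+n^{-it})$ and expanding $P_X^h\overline{P_X}^\ell\widetilde N_\phi^{\,k}$ into exponential sums, one integrates term by term via $\tfrac1T\int_T^{2T}\rho^{it}\,dt=\mathbbm{1}(\rho=1)+O(\min(1,(T|\log\rho|)^{-1}))$. The exact diagonal reads $(\prod_j q_j)\prod_{\epsilon_i=+1}n_i=(\prod_i p_i)\prod_{\epsilon_i=-1}n_i$, with $p_i,q_j\le X$ the primes from $P_X,\overline{P_X}$ and $n_i\le T^\eta$ the prime powers from $\widetilde N_\phi^{\,k}$ carrying signs $\epsilon_i$. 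Its leading part comes from configurations where the $p_i$'s match the $q_j$'s (forcing $h=\ell$, with $h!$ matchings, each contributing $\sum_{p\le X}p^{-1}=\log\log X+O(1)\sim\log\log T$) and, independently, the $n_i$'s match in opposite-sign pairs ($\mu_k=(k-1)!!$ matchings, each contributing $\tfrac{2}{(\log T)^2}\sum_n\tfrac{\Lambda(n)^2}{n}\hat\phi(\tfrac{\log n}{\log T})^2\to\sigma_\phi^2$ by the prime number theorem, using $\eta<1$), giving $\mathbbm{1}(h=\ell)\mu_k\sigma_\phi^k h!(\log\log T)^h$. Every other diagonal configuration is of lower order: ``mixed'' ones, in which a $\log$-zeta prime is used among the $n_i$, replace a factor $\asymp\log T$ by $\asymp\log X$ and sacrifice a $\log\log T$-sized pair; non-pairing solutions of $\prod_i n_i^{\epsilon_i}=1$ are lower order exactly as in Hughes--Rudnick's mock-Gaussian analysis (this is where $k$ even enters).

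\emph{Step~4 (off-diagonal).} It remains to bound the part of $\tfrac1T\int_T^{2T}P_X^h\overline{P_X}^\ell\widetilde N_\phi^{\,k}\,dt$ with $\rho\neq1$. Since $\eta<\tfrac2{k+2}$ lies well below the Hughes--Rudnick threshold $\tfrac2k$, this is comfortable: arguing as in \cite{HughesRudnick} --- grouping terms according to the size of $|\log\rho|$ and using the Mertens-type estimate $\sum_{n\le Y}\tfrac{\Lambda(n)}{\sqrt n|\log(n/\alpha)|}\ll\sqrt Y(\log Y)^2$, $\psi(Y)\ll Y$ and the smoothness of $\hat\phi$ --- the $\widetilde N_\phi$-part of the off-diagonal is $\ll T^{k\eta/2-1+o(1)}$, while the extra factors $P_X^h\overline{P_X}^\ell$ contribute only $T^{O(\theta)}$, so choosing $\theta$ small makes the total $O(T^{-\delta})$ for some $\delta>0$. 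Combining Steps~2--4 yields \eqref{3june.3}; in particular, taking $h=\ell=0$ recovers Hughes--Rudnick's result \eqref{HRtheorem} with the sharp cutoff.
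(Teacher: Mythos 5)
Your proposal follows essentially the same route as the paper: approximate both $\log\zeta$ (via Tsang--Selberg) and $N_\phi-\hat\phi(0)$ (via the explicit formula) by Dirichlet polynomials over primes and prime powers, swap them into the moment using H\"older's inequality --- which, as you correctly identify, requires a bounded $(k+2)$-th moment of the zero-counting polynomial and is exactly the source of the restriction $\eta<\tfrac{2}{k+2}$ --- and then evaluate $\tfrac1T\int P^h\overline{P}^\ell\widetilde N_\phi^k\,dt$ by extracting the diagonal (with the main term coming from perfect pairings $p_i=q_{i'}$ and opposite-sign pairings $n_j=n_{j'}$) and bounding the off-diagonal by a power of $T$. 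This matches the structure of the paper's Propositions~\ref{PropApprox}, \ref{prop:lkmoments}, and \ref{PropMomentPart2alt}, with only cosmetic differences (you keep the full prime-power sums rather than truncating to $\Lambda^*$, and you reach the $(k+2)$-th moment via Cauchy--Schwarz on the $(k+1)$-th absolute moment rather than by weighting the $(k+2)$-th moment with exponent $k/(k+2)$ in H\"older, but the numerology is the same).
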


The support condition required in \cref{mainthm} is not quite optimal in the sense of Hughes and Rudnick \cite{HughesRudnick}. While they showed that the $k$-th centered moment of $N_\phi$ is Gaussian for $\eta< 2/k$, our result holds only under the slightly more restrictive condition $\eta<2/(k+2)$. As further discussed in \cref{SectionApproximations}, this limitation arises from repeated applications of H\"older's inequality when approximating $\log\zeta$ and $N_\phi$ by suitable Dirichlet polynomials. Assuming the Riemann Hypothesis, we can improve the above result and obtain full support $\eta<2/k$ for $\Im \log \zeta$. We refer the reader to \cref{RemarkImvsRe} for a detailed discussion on the distinction between the real and imaginary parts of $\log\zeta$.

\begin{thm}\label{mainthm2}
    Assume the Riemann Hypothesis. Let $T \ge 3$ be a (large) parameter and let $\ell,k$ be fixed nonnegative integers with $k$ even. 
    For any function $\phi$ that satisfies \eqref{24may.1} with $\supp \hat\phi \subseteq (-\frac{2}{k},\frac{2}{k})$, we have
    \begin{equation}\begin{split}\label{3june.5}
        \frac{1}{T} \int_T^{2T} & \left(\Im\log\zeta(\tfrac{1}{2}+it) \right)^\ell \left(N_\phi(t)-\hat\phi(0)\right)^k dt \\
        &= \mu_\ell \mu_k \sigma_\phi^k \left(\log\log T\right)^{\ell/2} + O \left( (\log\log T)^{(\ell -1)/2} \right),
    \end{split}\end{equation} 
    where $\sigma_\phi^2$ and $\mu_k$ are defined as in \eqref{3june.1} and \eqref{3june.2}, respectively.
\end{thm}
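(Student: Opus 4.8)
The plan is to carry out the moment computation underlying \cref{mainthm} while invoking the Riemann Hypothesis at the one step where $\Im\log\zeta$ is replaced by a Dirichlet polynomial: under RH that replacement is accurate enough that the H\"older losses responsible for the restriction $\eta<2/(k+2)$ in \cref{mainthm} no longer occur, leaving only the condition $\eta<2/k$ inherited from the linear statistics. Concretely, fix a small $\theta>0$, put $X=T^{\theta}$, and use Selberg's formula (valid under RH) to write
\[
\Im\log\zeta(\tfrac12+it)=P(t)+\mathcal D(t),\qquad P(t):=-\sum_{n\le X}\frac{\Lambda(n)}{\sqrt n\,\log n}\sin(t\log n),
\]
where the crucial feature is that $\frac1T\int_T^{2T}|\mathcal D(t)|^{q}\,dt\ll_{q}1$ for every fixed $q$. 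On the other side, the Hughes--Rudnick explicit formula gives $N_\phi(t)-\hat\phi(0)=M(t)+\mathcal E_\phi(t)$, where $M(t):=-\tfrac{2}{\log(T/2\pi)}\sum_{n\ge 2}\tfrac{\Lambda(n)}{\sqrt n}\hat\phi\big(\tfrac{\log n}{\log(T/2\pi)}\big)\cos(t\log n)$ is a Dirichlet polynomial supported on $n<T^{\eta+o(1)}$ and $\mathcal E_\phi(t)\ll 1/\log T$. Substituting both decompositions into the left-hand side of \eqref{3june.5} and expanding the two powers by the binomial theorem, it then suffices to evaluate the single main term $\frac1T\int_T^{2T}P(t)^{\ell}M(t)^{k}\,dt$ and to show that every term containing a factor of $\mathcal D$ or of $\mathcal E_\phi$ is $O((\log\log T)^{(\ell-1)/2})$.

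For the main term I would proceed exactly as in the proof of \cref{mainthm}: expand $P^{\ell}M^{k}$ into exponential sums $n\mapsto n^{\pm it}$ and isolate the diagonal $\prod_i n_i^{\pm}\prod_j m_j^{\pm}=1$. Its dominant part arises by pairing the $\ell$ frequencies of $P$ among themselves and the $k$ frequencies of $M$ among themselves; by the prime number theorem this produces $\mu_\ell(\log\log T)^{\ell/2}$ from the $P$--$P$ pairings (so $\ell$ must be even, otherwise this term is already $O((\log\log T)^{(\ell-1)/2})$) and $\mu_k\sigma_\phi^k$ from the $M$--$M$ pairings, exactly as in \eqref{HRtheorem}. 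Cross-pairings between a $P$-frequency and an $M$-frequency, higher-order coincidences among the frequencies, and prime-power terms each lose at least one factor of $\log\log T$ and hence contribute only $O((\log\log T)^{(\ell-1)/2})$; the off-diagonal is negligible by the same argument as in \cref{mainthm}, and this is the sole place where $\eta<2/k$ is used --- the extra $P$-frequencies, living on the short range $n\le X$, do not affect that argument. This gives $\frac1T\int_T^{2T}P^{\ell}M^{k}\,dt=\mu_\ell\mu_k\sigma_\phi^k(\log\log T)^{\ell/2}+O((\log\log T)^{(\ell-1)/2})$.

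To handle the error terms I would use H\"older's inequality together with the moment bounds $\frac1T\int_T^{2T}|P|^{q}\ll_q(\log\log T)^{q/2}$, $\frac1T\int_T^{2T}|M|^{q}\ll_q 1$ and $\frac1T\int_T^{2T}|N_\phi-\hat\phi(0)|^{q}\ll_q 1$ valid throughout the range $q\eta<2$ (the last two being precisely the estimates underlying \eqref{HRtheorem}), the bound $|\mathcal E_\phi|\ll1/\log T$, and the RH bound on the moments of $\mathcal D$. A term with $j\ge1$ factors of $\mathcal E_\phi$ is $\ll(\log\log T)^{\ell/2}/\log T=o(1)$. A term with $b\ge1$ factors of $\mathcal D$ is bounded by $\frac1T\int_T^{2T}|P|^{\ell-b}|\mathcal D|^{b}|N_\phi-\hat\phi(0)|^{k}\,dt$; here I would apply H\"older with three exponents $(p_1,p_2,p_3)$, choosing $p_3>1$ close enough to $1$ that $kp_3<2/\eta$ --- possible exactly because $k\eta<2$ --- so that the $N_\phi$-factor contributes $O(1)$, and then taking $p_1,p_2$ large, so that the $P$-factor contributes $(\log\log T)^{(\ell-b)/2}$ while the $\mathcal D$-factor contributes $O(1)$ by RH. This gives $O((\log\log T)^{(\ell-b)/2})=O((\log\log T)^{(\ell-1)/2})$ and completes the argument.

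The main obstacle, and the one place where both RH and the restriction to the imaginary part are indispensable, is the claim $\frac1T\int_T^{2T}|\mathcal D(t)|^{q}\,dt\ll_q 1$ for all fixed $q$. Since $\Im\log\zeta(\tfrac12+it)=\pi S(t)$ is governed by the \emph{counting} of zeros near $t$, Selberg's formula under RH represents $\mathcal D$ essentially as a short tail Dirichlet polynomial over $X<n\le X^{2}$, together with a genuinely negligible remainder controlled by the number of zeros within $1/\log X$ of $t$ --- a quantity all of whose moments are $O(1)$ --- and this yields the bound. The analogous statement fails for $\Re\log\zeta=\log|\zeta|$, which plunges to $-\infty$ near every zero, so there the approximation error is not bounded in $L^{q}$ and the extra H\"older loss is unavoidable; this is the asymmetry discussed in \cref{RemarkImvsRe}. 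The remaining ingredients --- the precise shape of Selberg's formula under RH, the uniform moment bounds for $N_\phi-\hat\phi(0)$ and $M$ throughout the range $q\eta<2$, and the off-diagonal estimate for $\frac1T\int P^{\ell}M^{k}$ --- are classical or already available from the proof of \cref{mainthm}.
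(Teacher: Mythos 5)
Your overall plan---replace $\Im\log\zeta$ and $N_\phi-\hat\phi(0)$ by Dirichlet polynomials, evaluate the main-term diagonal, and kill the error terms---is the right shape, and your main-term and off-diagonal discussions match the paper's Propositions \ref{prop:lkmoments} and \ref{PropMomentPart2alt}. But the step where you handle the error terms coming from $\mathcal D = \Im\log\zeta - \Im P_x$ contains a genuine gap, and it is exactly the step the paper has to work hardest to get right.

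You propose to bound $\frac1T\int |P|^{\ell-b}|\mathcal D|^b|N_\phi-\hat\phi(0)|^k$ by three-exponent H\"older, taking $p_3>1$ so close to $1$ that $kp_3<2/\eta$, and then invoking a moment bound $\frac1T\int|N_\phi-\hat\phi(0)|^{q}\ll_q 1$ ``throughout the range $q\eta<2$.'' That bound is not available for the exponents you need: $kp_3$ is a \emph{non-integer} slightly larger than $k$, while Hughes--Rudnick control only even integer moments. You cannot interpolate from the $k$-th to the $(k+2)$-th moment, because when $\eta$ is close to $2/k$ the $(k+2)$-th moment is not $O(1)$ (indeed $(k+2)\eta>2$), and this is precisely the regime Theorem~\ref{mainthm2} is about. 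If instead you round $p_3$ up so that $kp_3$ is the next even integer, you land on $kp_3=k+2$ and recover exactly the unconditional restriction $\eta<2/(k+2)$ of Theorem~\ref{mainthm}, not the improvement. So H\"older on the $N_\phi$-factor is the thing that must be avoided, and your argument does not avoid it.

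Relatedly, you invoke RH in the wrong place. The unweighted bound $\frac1T\int_T^{2T}|\Im\log\zeta-\Im P_x|^{2m}\,dt\ll_m 1$ is Selberg--Tsang and is \emph{unconditional}; see \eqref{TsangErrorRevised}. RH buys something finer and different: it gives (via the bound $\Im L(t)\ll R(t)$ on page 71 of \cite{Tsang1}) a decomposition of $\Im(\log\zeta-P_x)$ into genuine Dirichlet-polynomial pieces $A_1,A_2,A_3,R$ whose coefficients do not depend on $t$. The paper's Proposition~\ref{PropApproxUnderthe Riemann Hypothesis} then \emph{keeps} $|A_i(t)|^{2h}\Starphi(t)^k$ together as a single Dirichlet polynomial of length $\ll x^{O(1)}T^{\eta k/2}\ll T^{1-\delta}$ (this is where $\eta<2/k$ enters, giving $\eta k/2<1$), and applies the $L^2$ mean value theorem directly, with no H\"older at all on the $\Starphi$-factor. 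That is the ingredient that gets past $2/(k+2)$. Without it, the error control breaks precisely at the non-integer moment of $N_\phi$ that your H\"older application calls for. (The same issue also explains the $\Re$/$\Im$ asymmetry: for $\Re L$ the analogue of $\Im L\ll R$ fails, so the pointwise reduction to a Dirichlet polynomial is not available; cf.\ \cref{RemarkImvsRe}.)

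Two smaller notes. First, your heuristic that ``$\mathcal D$ is essentially a short tail Dirichlet polynomial over $X<n\le X^2$ together with a remainder controlled by the number of zeros within $1/\log X$ of $t$'' is in the right spirit but glosses over the fact that after reducing $L$ to $R$ you still need to square out $|A_i|^{2h}\Starphi^k$ and run the mean value theorem, not an $L^q$ bound on the zero-counting remainder. Second, the constant in your main term: with $P(t)=-\sum_{n\le X}\frac{\Lambda(n)}{\sqrt n\log n}\sin(t\log n)$ the $P$--$P$ pairings produce $\mu_\ell\left(\tfrac12\log\log T\right)^{\ell/2}$ rather than $\mu_\ell(\log\log T)^{\ell/2}$, since the variance of $\Im\log\zeta$ is $\tfrac12\log\log T$; the factor $2^{-\ell/2}$ is visible in the paper's final display $\frac{\ell!}{2^{\ell}(\ell/2)!}=\mu_\ell/2^{\ell/2}$, so you should track it through your own computation.
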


From the moment estimate provided by \cref{mainthm} (respectively, \cref{mainthm2}), we can infer a convergence in distribution of $\log \zeta$ (respectively, $\Im \log \zeta$) with respect to the weighted measure introduced by the local zero statistics. We note that, as a consequence of \eqref{24may.1} and the explicit formula given by \cref{ExplForm_SumOverZeros} below, the expression $N_\phi(t) - \hat \phi(0)$ is approximately real (and it is in fact real under the Riemann Hypothesis). For any even integer $k$, we thus define the probability measure
\begin{align}\label{3june.4}
d\mu_{\phi,k}(t) := | N_\phi(t)-\hat\phi(0)|^{k} \frac{dt}{C_{\phi,k}}
\end{align}
on $[T,2 T]$, where $C_{\phi,k} = C_{\phi,k}(T) \sim \mu_k \sigma_\phi^k$ is the appropriate normalizing constant. By the method of moments, one quickly obtains the following distributional statements.

\begin{cor}\label{cor1}
Let $k$ be an even nonnegative integer. Suppose that $\phi$ satisfies \eqref{24may.1} with $\supp \hat\phi \subseteq (-\frac{2}{k+2},\frac{2}{k+2})$. For $T \ge 3$, let $\tau_k$ be a random variable sampled according to the probability measure $d\mu_{\phi,k}(t)$ on $[T,2 T]$. Then we have the convergence in distribution
\begin{equation}\label{10june.1}\notag
    \frac{\log \zeta ( \tfrac12 + i \tau_k)}{\sqrt{\log \log T}} \overset{T \to \infty}{\longrightarrow} \mathcal{N}_{\mathbb{C}}(0,1).
\end{equation}
\end{cor}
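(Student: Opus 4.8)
The plan is to derive \cref{cor1} from \cref{mainthm} by the standard method of moments, verifying that the weighted measure $d\mu_{\phi,k}$ turns the real and imaginary parts of $\log\zeta/\sqrt{\log\log T}$ into a sequence of random variables whose joint moments converge to those of a standard complex Gaussian, i.e. to those of $(X+iY)/\sqrt 2$ with $X,Y$ independent standard real Gaussians. Concretely, a standard complex Gaussian $Z = \mathcal{N}_{\CC}(0,1)$ is characterized by $\Expectation[Z^h \overline Z^\ell] = \mathbbm 1(h=\ell)\, h!$ for all nonnegative integers $h,\ell$, and since these mixed moments do not grow too fast (they satisfy a Carleman-type condition), convergence of all of them forces convergence in distribution. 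So it suffices to compute $\Expectation_{\mu_{\phi,k}}\big[(\log\zeta(\tfrac12+i\tau_k))^h \, \overline{(\log\zeta(\tfrac12+i\tau_k))}^\ell\big] / (\log\log T)^{(h+\ell)/2}$ and show it tends to $\mathbbm 1(h=\ell)\, h!$.

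The key computation is immediate from \cref{mainthm}: by definition of $d\mu_{\phi,k}$,
\begin{align*}
\Expectation_{\mu_{\phi,k}}\Big[\big(\log\zeta(\tfrac12+i\tau_k)\big)^h \overline{\big(\log\zeta(\tfrac12+i\tau_k)\big)}^\ell\Big]
= \frac{1}{C_{\phi,k}(T)} \cdot \frac{1}{T}\int_T^{2T} \big(\log\zeta(\tfrac12+it)\big)^h \overline{\big(\log\zeta(\tfrac12+it)\big)}^\ell \, |N_\phi(t)-\hat\phi(0)|^k \, dt.
\end{align*}
Here one first needs the remark made in the excerpt just before \eqref{3june.4}: by \eqref{24may.1} and the explicit formula (\cref{ExplForm_SumOverZeros}), $N_\phi(t)-\hat\phi(0)$ is real up to an error term that is negligible in all the relevant moment computations (and exactly real under RH), so $|N_\phi(t)-\hat\phi(0)|^k = (N_\phi(t)-\hat\phi(0))^k$ for even $k$, up to an admissible error. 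Thus the numerator is exactly the integral appearing in \eqref{3june.3} (with the support hypothesis $\supp\hat\phi\subseteq(-\frac{2}{k+2},\frac{2}{k+2})$ being precisely the one assumed in \cref{cor1}), so it equals $\mathbbm 1(h=\ell)\,\mu_k\sigma_\phi^k\, h!\,(\log\log T)^h + O((\log\log T)^{(h+\ell-1)/2})$. Taking $h=\ell=0$ gives $C_{\phi,k}(T) = \mu_k\sigma_\phi^k + o(1)$, consistent with the stated normalization, and dividing through,
\begin{align*}
\frac{1}{(\log\log T)^{(h+\ell)/2}}\,\Expectation_{\mu_{\phi,k}}\Big[\big(\log\zeta(\tfrac12+i\tau_k)\big)^h \overline{\big(\log\zeta(\tfrac12+i\tau_k)\big)}^\ell\Big]
= \mathbbm 1(h=\ell)\, h! + O\big((\log\log T)^{-1/2}\big) \to \mathbbm 1(h=\ell)\, h!.
\end{align*}

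To convert this into convergence in distribution of the $\CC$-valued random variable $\log\zeta(\tfrac12+i\tau_k)/\sqrt{\log\log T}$, I would note that convergence of all mixed moments $\Expectation[Z_T^h \overline{Z_T}^\ell]$ to those of $\mathcal N_\CC(0,1)$ implies convergence in distribution, because the complex Gaussian is determined by its moments: writing $Z_T = U_T + iV_T$, the real mixed moments $\Expectation[U_T^a V_T^b]$ are finite linear combinations of the $\Expectation[Z_T^h\overline{Z_T}^\ell]$ with $h+\ell = a+b$, hence converge to the corresponding moments of $(U,V)$ with $U,V$ i.i.d. real Gaussian, and a bivariate Gaussian satisfies the Carleman condition, so the method of moments applies in $\RR^2\cong\CC$. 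This last step is entirely standard and is exactly the reasoning behind the passage from the moment formula to \eqref{5june.1}; the only genuinely paper-specific input is \cref{mainthm} itself. The main (very mild) obstacle is the bookkeeping around the reality of $N_\phi(t)-\hat\phi(0)$: one must be slightly careful that replacing $|N_\phi(t)-\hat\phi(0)|^k$ by $(N_\phi(t)-\hat\phi(0))^k$ in the numerator of the moment ratio introduces only an error of size $o((\log\log T)^{(h+\ell)/2})$, which follows from the explicit formula together with a trivial bound on the imaginary part of $N_\phi$ and an application of Hölder's inequality against the already-established moments of $\log\zeta$; no new ideas are needed beyond what is used to prove \cref{mainthm}.
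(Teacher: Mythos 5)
Your proposal is correct and takes essentially the same approach as the paper: it reduces the statement to \cref{mainthm} via the method of moments, handles the reality of $N_\phi(t)-\hat\phi(0)$ up to a negligible error using \cref{ExplForm_SumOverZeros}, and identifies the normalizing constant $C_{\phi,k}\sim\mu_k\sigma_\phi^k$ from the $h=\ell=0$ case. The paper's own argument is a compressed version of exactly this reasoning.
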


\begin{cor}\label{cor2}
Assume the Riemann Hypothesis. Let $k$ be an even nonnegative integer and suppose that $\phi$ satisfies \eqref{24may.1} with $\supp \hat\phi \subseteq (-\frac{2}{k},\frac{2}{k})$. For $T \ge 3$, let $\tau_k$ be a random variable sampled according to the probability measure $d\mu_{\phi,k}(t)$ on $[T,2 T]$. Then we have the convergence in distribution
\begin{equation}\label{10june.2}\notag
    \frac{\Im \log \zeta ( \tfrac12 + i \tau_k)}{\sqrt{\frac{1}{2} \log \log T}} \overset{T \to \infty}{\longrightarrow} \mathcal{N}(0,1).
\end{equation}
\end{cor}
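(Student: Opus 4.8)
The plan is to deduce \cref{cor2} from \cref{mainthm2} by the method of moments. Since the standard Gaussian distribution is uniquely determined by its moments $(\mu_\ell)_{\ell\ge0}$, it suffices to show that, for every fixed nonnegative integer $\ell$, the $\ell$-th moment of $\Im\log\zeta(\tfrac12+i\tau_k)/\sqrt{\tfrac12\log\log T}$ converges to $\mu_\ell$ as $T\to\infty$. Unwinding the definition \eqref{3june.4} of the probability measure $d\mu_{\phi,k}$, this moment equals
\[
\frac{1}{(\tfrac12\log\log T)^{\ell/2}}\cdot\frac{\int_T^{2T}\bigl(\Im\log\zeta(\tfrac12+it)\bigr)^{\ell}\,|N_\phi(t)-\hat\phi(0)|^{k}\,dt}{\int_T^{2T}|N_\phi(t)-\hat\phi(0)|^{k}\,dt},
\]
so the task reduces to an asymptotic evaluation of the numerator and of the denominator; inserting the harmless common factor $\tfrac1T$, both are moments of the kind covered by \cref{mainthm2}.

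The feature that makes this go through cleanly is that the support hypothesis $\supp\hat\phi\subseteq(-\tfrac2k,\tfrac2k)$ in \cref{cor2} is precisely the one demanded by \cref{mainthm2}, and in particular does not depend on $\ell$: once $\phi$ is fixed, \cref{mainthm2} is available simultaneously for all exponents $\ell$, so every moment is within reach. Before applying it I would remove the absolute value. Assuming the Riemann Hypothesis, the explicit formula \cref{ExplForm_SumOverZeros} shows that $N_\phi(t)-\hat\phi(0)$ is real, so, $k$ being even, one may replace $|N_\phi(t)-\hat\phi(0)|^{k}$ by $(N_\phi(t)-\hat\phi(0))^{k}$ in both integrals. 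The case $\ell=0$ of \cref{mainthm2} (where $\mu_0=1$) then identifies the denominator as $\tfrac1T\int_T^{2T}(N_\phi(t)-\hat\phi(0))^{k}\,dt=\mu_k\sigma_\phi^{k}+o(1)$, that is, the Hughes--Rudnick variance normalization $C_{\phi,k}$. Dividing the numerator asymptotic of \cref{mainthm2} by this, the common factor $\mu_k\sigma_\phi^{k}$ cancels and one is left with
\[
\Expectation\bigl[(\Im\log\zeta(\tfrac12+i\tau_k))^{\ell}\bigr]=\frac1T\int_T^{2T}\bigl(\Im\log\zeta(\tfrac12+it)\bigr)^{\ell}\,dt+O\bigl((\log\log T)^{(\ell-1)/2}\bigr).
\]
By Selberg's moment asymptotic for $\Im\log\zeta$ recalled in the introduction, the main term on the right equals $\mu_\ell(\tfrac12\log\log T)^{\ell/2}$, up to the same error; informally, the zero-counting weight does not alter the Selberg distribution of $\Im\log\zeta$ to leading order.

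Dividing by $(\tfrac12\log\log T)^{\ell/2}$ and letting $T\to\infty$ gives the limit $\mu_\ell$ for every $\ell$, so by the method of moments we obtain the convergence in distribution \eqref{10june.2}. (The proof of \cref{cor1} has the same structure, invoking \cref{mainthm} with $h=\ell$ together with the moment-determinacy of the standard complex Gaussian.) I do not expect a genuine obstacle in this argument: essentially all of the analytic content has been packaged into \cref{mainthm2}, and only two points call for a word of justification---the passage from $|N_\phi(t)-\hat\phi(0)|^{k}$ to $(N_\phi(t)-\hat\phi(0))^{k}$, which is exact under the Riemann Hypothesis and is the reason \cref{cor2} can be stated in this clean conditional form, and the identification of the normalizing constant $C_{\phi,k}$, which is supplied by the $\ell=0$ case above.
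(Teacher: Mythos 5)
Your argument is correct and follows essentially the same route as the paper: unwind the probability measure \eqref{3june.4}, identify the normalization constant $C_{\phi,k}$ via the $\ell=0$ case of the moment estimate (equivalently \cref{RemarkHRsharp}), apply \cref{mainthm2} for general $\ell$, and conclude by the method of moments. Two small observations are worth recording. First, your passage from $|N_\phi(t)-\hat\phi(0)|^{k}$ to $(N_\phi(t)-\hat\phi(0))^{k}$ under the Riemann Hypothesis is immediate from the very definition of $N_\phi$ (a sum of $\phi$, which is real-valued, over real ordinates $\gamma$) and requires no appeal to the explicit formula; the paper only needs the weaker ``approximately real'' statement because it also treats the unconditional \cref{cor1}. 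Second, your intermediate identification of the weighted moment with the unweighted Selberg moment $\tfrac1T\int_T^{2T}(\Im\log\zeta(\tfrac12+it))^{\ell}\,dt \sim \mu_\ell(\tfrac12\log\log T)^{\ell/2}$ silently requires that the main term of \cref{mainthm2} be $\mu_\ell\mu_k\sigma_\phi^{k}(\tfrac12\log\log T)^{\ell/2}$, whereas the displayed formula \eqref{3june.5} reads $\mu_\ell\mu_k\sigma_\phi^{k}(\log\log T)^{\ell/2}$. Tracing the paper's own proof of \cref{mainthm2}, the main term there is computed to be
\[
\mu_k\sigma_\phi^{k}\,\frac{\ell!}{2^{\ell}(\ell/2)!}\,(\log\log T)^{\ell/2}
= \mu_\ell\mu_k\sigma_\phi^{k}\,\left(\tfrac12\log\log T\right)^{\ell/2},
\]
so the version you use — and the normalization $\sqrt{\tfrac12\log\log T}$ in \cref{cor2} itself — is the correct one; the factor $2^{-\ell/2}$ appears to have been dropped in the displayed statement of \cref{mainthm2}. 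This is a typographical slip in the paper, not a gap in your argument.
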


The two corollaries above are examples of weighted Central Limit Theorems, where the weight is arithmetic in nature. Another type of weight studied in the literature arises from moments of zeta itself. This topic, which is intimately related to the large deviation regime \cite{Radziwill1,ArguinBailey,ArguinBailey2}, has been studied both for zeta \cite{FazzariWCLT1, FazzariWCLT2} and, in mollified form, for more general $L$-functions \cite{BELP}. These works indicate that the value distribution of log-zeta around points $t$ such that $\zeta(\frac12+it)\asymp (\log T)^k$, i.e. those responsible for the $2k$-th moment, is approximately Gaussian with the same variance but mean $k\log\log T$.

In recent years, the joint distribution of the correlations of zeros of zeta and its moments has also been investigated \cite{FazzariW1LD,BettinFazza}. These ideas, which can be used to deduce results on large values of $\zeta$ near its
zeros, have been extended to more general $L$-functions \cite{FazzariW1LD, Su-Su, Sugiyama,Zhao}, and partially to the case of the 2-correlation \cite{FazzariWPC}.

Corollaries \ref{cor1} and \ref{cor2} show that $\log\zeta$ and $N_\phi$ behave approximately like two independent random variables. Indeed, their joint moments factor into the product of the Gaussian moments of the logarithm of zeta and the mock-Gaussian moments of the correlation of zeros. In this sense, our results can be viewed as a joint version of Selberg's Central Limit Theorem and the work of Hughes and Rudnick, reflecting the asymptotic independence of these two objects.

While true at leading order, these considerations on the independence of $\log\zeta$ and $N_\phi$ break down at a finer level. This is due to the fact that secondary terms in Selberg's Central Limit Theorem are significantly more delicate, as contributions from the zeros in \eqref{HybridModel} begin to play a nonnegligible role; see, for instance, \cite{Goldston1,LMQ,FG}.
To illustrate this, we consider the correlation of $\log\zeta(\frac12+it)$ and $N_\phi(t)-\hat\phi(0)$. Individually, the first moment of $\log\zeta$ is extremely small; under the Riemann Hypothesis, a minor adaptation of the proof of \eqref{GoldstonEst} shows that 
\begin{align*}
\frac{1}{T}\int_T^{2T} \log\zeta(\tfrac12+it)dt \ll \frac{\log T}{T}.
\end{align*}
Similarly, we will see that
\begin{align*} 
\frac{1}{T} \int_T^{2 T} \left (N_\phi(t) - \hat \phi(0) \right) \, dt \ll \frac{1}{\log T}, 
\end{align*}
for example as a consequence of \cref{RemarkHRsharp}.
For the correlation between the two quantities, we have the following estimate.

\begin{prop}\label{PropCorrelation}
    Assume the Riemann Hypothesis. Let $T \ge 3$ be a (large) parameter and let $\phi$ be a function satisfying \eqref{24may.1} with $\supp \hat\phi \subseteq (-1,1)$. Then we have
    \begin{equation}\notag
        \frac{1}{T}\int_{T}^{2T} \log \zeta(\tfrac{1}{2}+it) \, \left(N_\phi(t) - \hat\phi(0)\right) dt 
        = -\frac{\phi(0)}{2}+O\bigg(\frac{\sqrt{\log\log T}}{\log T}\bigg).
    \end{equation}
\end{prop}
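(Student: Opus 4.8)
The plan is to expand both factors into Dirichlet polynomials over prime powers and isolate the diagonal contribution. For the zero side, I would apply the explicit formula \cref{ExplForm_SumOverZeros}, which under the Riemann Hypothesis gives
\begin{equation*}
N_\phi(t)-\hat\phi(0)=P(t)+R_1(t),\qquad P(t):=-\frac{2}{\log T}\sum_{n\ge 2}\frac{\Lambda(n)}{\sqrt n}\,\hat\phi\!\left(\frac{\log n}{\log T}\right)\cos(t\log n),
\end{equation*}
where $R_1$ is the archimedean remainder, of size $R_1(t)\ll 1/\log T$ uniformly for $t\in[T,2T]$; since $\supp\hat\phi\subseteq(-1,1)$, the polynomial $P$ is supported on $n<T$. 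For the zeta side, I would approximate (under RH) $\log\zeta(\tfrac12+it)=D(t)+E(t)$ by a Dirichlet polynomial $D(t)=\sum_{n\le X}\frac{\Lambda_X(n)}{n^{1/2+it}\log n}$ of length $X=T^\theta$ with $\eta_\phi<\theta<1$ and $\Lambda_X(n)=\Lambda(n)$ for $n\le T^{\eta_\phi}$, as provided by \cref{SectionApproximations}; the point of choosing $\theta>\eta_\phi$ is that $D$ then reproduces exactly those Dirichlet coefficients of $\log\zeta$ that the low-frequency polynomial $P$ can detect.

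The main term comes from $\frac1T\int_T^{2T}D(t)P(t)\,dt$. Opening both sums and using the orthogonality relation $\frac1T\int_T^{2T}(m/n)^{it}\,dt=\mathbbm 1(n=m)+O\!\big(\min(1,\tfrac{1}{T|\log(n/m)|})\big)$ together with $\frac1T\int_T^{2T}(nm)^{-it}\,dt\ll\tfrac{1}{T\log(nm)}$, and using that all indices here are $<T$ so that no off-diagonal pair satisfies $|\log(n/m)|<1/T$, a routine estimate bounds the off-diagonal and the $(nm)^{-it}$ terms by $\ll T^{-\delta}$ for some $\delta>0$ (note that, although $D$ is complex-valued, the surviving diagonal is real). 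That diagonal equals $-\frac{1}{\log T}\sum_{n\le T^{\eta_\phi}}\frac{\Lambda(n)^2}{n\log n}\hat\phi\!\left(\frac{\log n}{\log T}\right)$. The prime-power terms with exponent $\ge 2$ contribute $O(1/\log T)$, and for the prime terms, partial summation together with Mertens' estimate $\sum_{p\le x}\frac{\log p}{p}=\log x+O(1)$, using that $\hat\phi$ is even with $\supp\hat\phi\subseteq(-1,1)$, gives $\sum_p\frac{\log p}{p}\hat\phi\!\left(\frac{\log p}{\log T}\right)=\frac{\log T}{2}\int_{\mathbb R}\hat\phi(u)\,du+O(1)=\frac{\log T}{2}\phi(0)+O(1)$. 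Hence $\frac1T\int_T^{2T}D(t)P(t)\,dt=-\frac{\phi(0)}{2}+O(1/\log T)$.

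It remains to bound $\frac1T\int_T^{2T}\log\zeta(\tfrac12+it)R_1(t)\,dt$ and $\frac1T\int_T^{2T}E(t)P(t)\,dt$. For the first, since $R_1(t)\ll1/\log T$ and $\frac1T\int_T^{2T}|\log\zeta(\tfrac12+it)|^2\,dt\ll\log\log T$ is the classical second moment, Cauchy--Schwarz gives $\frac1T\int_T^{2T}\log\zeta(\tfrac12+it)R_1(t)\,dt\ll\frac{1}{\log T}\big(\frac1T\int|\log\zeta|^2\big)^{1/2}\ll\frac{\sqrt{\log\log T}}{\log T}$; it is this term that produces the error in the statement. For $\frac1T\int_T^{2T}E(t)P(t)\,dt$, one uses that $P$ is supported on frequencies $\log m$ with $m\le T^{\eta_\phi}<X$, whereas $D$ matches the corresponding Dirichlet coefficients of $\log\zeta$ exactly; writing $E=\log\zeta-D$ and computing $\frac1T\int_T^{2T}E(t)m^{it}\,dt$ for such $m$ via \cref{SectionApproximations}, the integral reduces to off-diagonal pairs with $|\log(n/m)|\gg(\theta-\eta_\phi)\log T$ and is thus $\ll T^{-\delta}$ (alternatively, one bounds it crudely by $\|E\|_2\|P\|_2\ll\|E\|_2$, using $\|P\|_2=\sigma_\phi+o(1)$ and the mean-square bound on $E$). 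Collecting everything yields the claimed formula.

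The main obstacle is the term $\frac1T\int_T^{2T}E\,P$: controlling it comfortably hinges on the fact that one is correlating $\log\zeta$ against the quantity $N_\phi-\hat\phi(0)$, whose $L^2$-norm is of order $1$ rather than $\sqrt{\log\log T}$, and against a polynomial $P$ whose frequencies all lie below the truncation level $\log X$, so that no cancellation beyond orthogonality is needed; and one must ensure that the RH-conditional approximation of $\log\zeta$ can be chosen with the correct coefficients for all $n\le T^{\eta_\phi}$ while keeping $X$ a power of $T$ strictly below $T$ --- which is exactly what the hypothesis $\supp\hat\phi\subseteq(-1,1)$ permits. The remaining ingredients --- orthogonality, the Mertens computation, the prime-power bookkeeping, and the two Cauchy--Schwarz bounds --- are routine, and the error $\sqrt{\log\log T}/\log T$ is simply the cost of the latter.
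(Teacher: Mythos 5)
Your main-term calculation matches the paper's: both reduce to the diagonal $-\frac{1}{\log T}\sum_n \frac{\Lambda(n)^2}{n\log n}\hat\phi(\tfrac{\log n}{\log T})$ and evaluate it by Mertens/summation by parts to get $-\phi(0)/2 + O(1/\log T)$, and your Cauchy--Schwarz bound on the archimedean remainder $R_1 \ll 1/\log T$ correctly produces the stated error $O(\sqrt{\log\log T}/\log T)$, which is also exactly where the paper's error comes from. The divergence is in how the zeta side is treated: the paper does \emph{not} approximate $\log\zeta$ by a Dirichlet polynomial at all for this proposition. Instead it invokes Goldston's RH-conditional estimate \eqref{GoldstonEst}, which gives an exact asymptotic for $\int_T^{2T}\log\zeta(\tfrac12+it)\,n^{\pm it}\,dt$ via a contour shift of $\log\zeta(s)n^{\pm s}$ into the half-plane of absolute convergence. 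This avoids the need to control any error term $E$ at all.

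The gap in your argument is the treatment of $\frac1T\int_T^{2T}E(t)P(t)\,dt$. The ``off-diagonal'' heuristic fails because $E(t)=\log\zeta(\tfrac12+it)-D(t)$ is not a Dirichlet polynomial: under RH, Selberg--Tsang's decomposition \eqref{approx1} of $E$ contains the term $L(t)$, a genuine sum over the nontrivial zeros, and it is precisely the correlation of $L(t)$ against a single low frequency $m^{it}$ that needs the contour-shift argument from \cite{Goldston1,Titchmarsh1}. Nothing in \cref{SectionApproximations} gives such pointwise frequency control --- the results there are mean-square ($L^{2m}$) bounds on $\Re E, \Im E$, not estimates of $\int E(t)m^{it}\,dt$ for individual $m$. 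Your fallback, $\|E\|_2\|P\|_2$, is also insufficient: the Selberg--Tsang bound is $\|E\|_2 \asymp 1$ (not $o(1)$), so this route only gives an error $O(1)$, which is the same size as the main term $-\phi(0)/2$ and renders the claim vacuous. To make the asymptotic genuine, you need the precision of Goldston's theorem --- matching Dirichlet coefficients of $D$ to those of $\log\zeta$ does not by itself cancel the contribution of the zeros hidden in $E$.
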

Another way of stating this is by looking at the correlation coefficient between the random variables $\log \zeta(\tfrac{1}{2}+i \tau)$ and $N_\phi(\tau)$ with $\tau$ uniformly distributed on $[T,2T]$. Namely, \cref{PropCorrelation} immediately implies (under the Riemann Hypothesis) that
\[ \frac{\mathbb{E}\big[\log \zeta(\tfrac{1}{2}+i \tau) ( N_\phi(\tau) - \hat \phi(0) )\big]}{\mathbb{E}\big[ | \log \zeta(\tfrac{1}{2}+i \tau) |^2 \big]^{1/2} \mathbb{E} \big[( N_\phi(\tau) - \hat \phi(0) )^2\big]^{1/2}} \sim - \frac{\phi(0)}{2 \sigma_\phi } \frac{1}{\sqrt{\log \log T}} \]
as $T \to \infty$, assuming for ease of notation that $\phi (0) \ne 0$.

The paper is structured as follows: \cref{SectionCorrelation} is dedicated to the explicit formula that expresses $N_\phi$ in terms of a sum over prime powers, and to the proof of \cref{PropCorrelation}. As a first step toward Theorems \ref{mainthm} and \ref{mainthm2}, \cref{SectionApproximations} reduces the problem to studying moments of Dirichlet polynomial approximations. These moments are computed in \cref{SectionMomentEstimates}, concluding the paper.

\subsection*{Acknowledgments} 
This work was partially carried out while M.G. and P.M. were based at KTH. A.F. is grateful to Pär Kurlberg for his invitation, and would like to thank Sandro Bettin and Youness Lamzouri for inspiring conversations.
A.F. is a member of the INdAM group GNAMPA, and is supported by the Fonds de recherche du Qu\'ebec - Nature et technologies, Projet de recherche en \'equipe 300951.
P.M. thanks the Knut and Alice Wallenberg foundation grant KAW 2019.0503 from Prof. Kurlberg. P.M. is partially supported by the Austrian Science Fund (FWF),  grant-doi 10.55776/P35322.
This research was funded in whole or in part by the Austrian Science Fund (FWF) 10.55776/PAT4579123. For open access purposes, the authors have applied a CC BY public copyright license to any author accepted manuscript version arising from this submission.
\normalsize


\section{Explicit formula and Proof of \texorpdfstring{\cref{PropCorrelation}}{PropCorrelation}}\label{SectionCorrelation}

First, we state the following version of the explicit formula.

\begin{lem}\label{ExplForm_SumOverZeros}
    Let $T \ge 3$ be a (large) parameter and let $\phi$ be a function satisfying \eqref{24may.1}. Then, for any $t\in[T,2T]$, we have
    \begin{align*}
        N_\phi(t) 
        &= \hat\phi(0) + S_\phi(t) + O\bigg(\frac{1}{\log T}\bigg),
    \end{align*}
    with
    \begin{align*}\notag
        S_\phi(t) &:= -\frac{2}{\log T} \sum_{n=1}^{\infty} \frac{\Lambda(n)}{\sqrt{n}} \hat\phi \bigg(\frac{\log n}{\log T}\bigg) \cos(t\log n).
    \end{align*}
\end{lem}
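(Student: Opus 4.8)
The plan is to expand $\phi$ by Fourier inversion and then apply the explicit formula (in the form used in \cite{HughesRudnick}). Since $\hat\phi$ is smooth and supported in $(-\eta,\eta)$, the representation $\phi(z)=\int_{-\eta}^{\eta}\hat\phi(u)e^{2\pi izu}\,du$ shows that $\phi$ extends to an entire function of exponential type $2\pi\eta$ with $|\phi(x+iy)|\ll_N e^{2\pi\eta|y|}(1+|x|)^{-N}$ for every $N$ (integrate by parts in $u$); in particular $g\colon r\mapsto\phi\bigl(\tfrac{\log T}{2\pi}(r-t)\bigr)$ is an admissible test function. Using that the nontrivial zeros $\tfrac12+i\gamma$ are invariant under $\gamma\mapsto-\gamma$ (by the functional equation together with $\zeta(\bar s)=\overline{\zeta(s)}$), so that $g$ may be replaced by its even part, the explicit formula reads
\begin{align*}
N_\phi(t)=\sum_{\gamma}g(\gamma)
&=\frac{1}{2\pi}\int_{-\infty}^{\infty}g(r)\left(\operatorname{Re}\frac{\Gamma'}{\Gamma}\Bigl(\tfrac14+\tfrac{ir}{2}\Bigr)-\log\pi\right)dr \\
&\quad -\frac{1}{2\pi}\sum_{n\ge 2}\frac{\Lambda(n)}{\sqrt n}\bigl(\widetilde g(\log n)+\widetilde g(-\log n)\bigr) \\
&\quad +\phi\Bigl(\tfrac{\log T}{2\pi}\bigl(\tfrac{i}{2}-t\bigr)\Bigr)+\phi\Bigl(\tfrac{\log T}{2\pi}\bigl(-\tfrac{i}{2}-t\bigr)\Bigr),
\end{align*}
where $\widetilde g(\xi):=\int_{-\infty}^{\infty}g(r)e^{-ir\xi}\,dr$ and the last two terms come from the pole of $\zeta$ at $s=1$.

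For the prime sum, the substitution $r=t+\tfrac{2\pi y}{\log T}$ gives $\widetilde g(\xi)=\tfrac{2\pi}{\log T}e^{-it\xi}\hat\phi\bigl(\tfrac{\xi}{\log T}\bigr)$; evaluating at $\xi=\pm\log n$ and using that $\hat\phi$ is even yields $\widetilde g(\log n)+\widetilde g(-\log n)=\tfrac{4\pi}{\log T}\hat\phi\bigl(\tfrac{\log n}{\log T}\bigr)\cos(t\log n)$, so the prime sum equals exactly $-\tfrac{2}{\log T}\sum_{n\ge 2}\tfrac{\Lambda(n)}{\sqrt n}\hat\phi\bigl(\tfrac{\log n}{\log T}\bigr)\cos(t\log n)=S_\phi(t)$ (since $\Lambda(1)=0$ the sum may start at $n=1$, and since $\hat\phi$ is compactly supported it is a finite sum with $n<T^{\eta}$). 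For the pole term, the bound on $\phi$ gives $\bigl|\phi\bigl(\tfrac{\log T}{2\pi}(\pm\tfrac{i}{2}-t)\bigr)\bigr|\ll_N T^{\eta/2}(t\log T)^{-N}\ll T^{-1}$ for $N$ large, which is absorbed into the error term $O(1/\log T)$.

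It remains to handle the archimedean integral. By Stirling, $\operatorname{Re}\frac{\Gamma'}{\Gamma}\bigl(\tfrac14+\tfrac{ir}{2}\bigr)-\log\pi=\log\tfrac{|r|}{2\pi}+O\bigl(\tfrac{1}{1+|r|}\bigr)$ (for $|r|$ bounded away from $0$), so after the same substitution the integral becomes $\tfrac{1}{\log T}\int_{-\infty}^{\infty}\phi(y)\log\tfrac{|t+2\pi y/\log T|}{2\pi}\,dy$ up to an error $O\bigl(\tfrac{1}{T\log T}\bigr)$. I would then split the remaining integral at $|y|=\tfrac{t\log T}{4\pi}$: on the inner range $\log\tfrac{|t+2\pi y/\log T|}{2\pi}=\log\tfrac{t}{2\pi}+O\bigl(\tfrac{|y|}{t\log T}\bigr)$, which contributes $\tfrac{\log(t/2\pi)}{\log T}\hat\phi(0)+O(1/T)$ after extending $\int_{|y|\le t\log T/4\pi}\phi(y)\,dy$ back to $\hat\phi(0)$ at the cost of a super-polynomially small error; on the outer range the rapid decay of $\phi$ makes the contribution $\ll_N T^{-N}$. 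Since $\log\tfrac{t}{2\pi}=\log T+O(1)$ for $t\in[T,2T]$, we have $\tfrac{\log(t/2\pi)}{\log T}=1+O(1/\log T)$, and assembling the three pieces yields $N_\phi(t)=\hat\phi(0)+S_\phi(t)+O(1/\log T)$.

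The only step needing genuine care is the tail of the archimedean integral: away from $r=t$ the weight $\log|r|$ is no longer comparable to $\log T$, and it is precisely here that the Schwartz decay of $\phi$ (a consequence of the smoothness of $\hat\phi$), rather than mere integrability, is needed to keep the error at size $O(1/\log T)$. Everything else is a routine unwinding of the explicit formula.
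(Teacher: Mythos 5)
Your proof is correct and follows essentially the same route the paper sketches: start from the Riemann–Weil explicit formula (as in Hughes–Rudnick Lemma~2.1), apply Stirling's formula to the archimedean term, bound the pole contributions using the Paley–Wiener decay of $\phi$, and finish with the trivial approximation $\log\frac{t}{2\pi}=\log T+O(1)$. The paper's proof is a one-paragraph pointer to Hughes–Rudnick and to the steps on page~6 of \cite{BettinFazza}; you have simply carried out those steps explicitly, with the correct normalizations, and your error analysis (in particular the splitting of the archimedean integral at $|y|\asymp t\log T$ and the observation that the rapid decay of $\phi$ is what makes the tail and the pole terms negligible) is exactly the kind of "repeated integration by parts to handle the polar terms" that the paper alludes to.
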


\begin{proof}
    This is a slight modification of the classical explicit formula. Starting from \cite[Lemma 2.1]{HughesRudnick}, it suffices to follow the steps on page 6 of \cite{BettinFazza}. Specifically, one applies Stirling's approximation formula for the $\Gamma$-functions and repeatedly integrates by parts to handle the polar terms. The claim then follows from the trivial approximation $\log\frac{t}{2\pi}=\log T +O(1)$ for $T\leq t\leq 2T$.
\end{proof}

\begin{rem}\label{Rem_SvsS*}
    For our applications, it is convenient to state a version of the explicit formula in which the sum over prime powers is truncated at squares of primes. To this end, we define
    \begin{align*}\Starphi(t) := -\frac{2}{\log T} \sum_{n=1}^{\infty} \frac{\Lambda^*(n)}{\sqrt{n}} \hat\phi \bigg(\frac{\log n}{\log T}\bigg) \cos(t\log n),
    \end{align*}
    where $\Lambda^*(n)=\log p$ if $n=p^\alpha$ for some prime $p$ and $\alpha \in \{1,2\}$ and $\Lambda^*(n) = 0$ otherwise. With this notation, it is clear that 
    \begin{equation}\label{SS*Comp}
    S_\phi(t) = \Starphi(t) + O\left(\frac{1}{\log T}\right),
    \end{equation}
    and thus \cref{ExplForm_SumOverZeros} can be rewritten as
    \begin{align}\label{ExplFormS*}
    N_\phi(t) = \hat\phi(0) +\Starphi(t)+\bigg(\frac{1}{\log T}\bigg).
    \end{align} 
\end{rem}

Equipped with \cref{ExplForm_SumOverZeros}, we can now estimate the correlation between $\log \zeta$ and $N_\phi$.

\subsection*{Proof of \texorpdfstring{\cref{PropCorrelation}}{PropCorrelation}}
    By Lemma \ref{ExplForm_SumOverZeros}, we have
    \begin{equation}\begin{split}\label{CorrExplForm} \frac{1}{T}\int_{T}^{2T} \log \zeta(\tfrac{1}{2}+it) \, \left(N_\phi(t) - \hat\phi(0)\right) dt &= \frac{1}{T}\int_{T}^{2T} \log \zeta(\tfrac{1}{2}+it) \, S_\phi(t) dt \\ &+ O \left( \frac{1}{T \log T} \int_T^{2 T} | \log \zeta (\tfrac{1}{2}+it) | \, dt \right). \end{split}\end{equation}
    As for the error term, an application of the Cauchy-Schwarz inequality followed by the classical Selberg Central Limit Theorem for the second moment yields
    \begin{equation*}
        \begin{split}
            \frac{1}{T} \int_T^{2 T} | \log \zeta (\tfrac{1}{2}+it) | \, dt &\ll \left( \frac{1}{T} \int_T^{2 T} | \log \zeta (\tfrac{1}{2}+it) |^2 \, dt \right)^{1/2} \ll \sqrt{\log \log T}.
        \end{split}
    \end{equation*}
    Plugging this into \eqref{CorrExplForm}, it thus suffices to show that
    \begin{equation}\label{CorrSTS}
        \frac{1}{T}\int_{T}^{2T} \log \zeta(\tfrac{1}{2}+it) \, S_\phi(t) dt 
        = -\frac{\phi(0)}{2}+O\bigg(\frac{\sqrt{\log\log T}}{\log T}\bigg).
    \end{equation}
    Expanding the cosines in the definition of $S_\phi$ in terms of exponentials, this amounts to understanding the correlation of $\log \zeta$ with $n^{\pm i t}$.
    Work of Goldston \cite[p. 169]{Goldston1} under the Riemann Hypothesis implies that for any integer $2 \le n \le T$  (say), we have
    \begin{equation}\label{GoldstonEst}
        \begin{split}
            \int_T^{2 T} \log \zeta(\tfrac{1}{2}+it) n^{i t} \, dt &= \frac{T \Lambda(n)}{\sqrt{n} \log n} + O \left( \sqrt{n} \log T \right), \\
            \int_T^{2 T} \log \zeta(\tfrac{1}{2}+it) n^{-i t} \, dt &\ll \log T.
        \end{split}
    \end{equation}
    The proof of this statement is by means of a contour shift argument on the functions $\log \zeta(s) n^{\pm s}$ followed by evaluating the associated Dirichlet series in the region of absolute convergence. It is an effective version of previous work \cite{Titchmarsh1} by Titchmarsh.

    As a consequence of \eqref{GoldstonEst}, we obtain
    \begin{equation}\label{CorrInterm}
        \begin{split}
            \frac{1}{T}&\int_{T}^{2T} \log \zeta(\tfrac{1}{2}+it) \, S_\phi(t) dt \\ &= -\frac{1}{T \log T} \sum_{n \ge 2} \frac{\Lambda(n)}{\sqrt{n}} \hat \phi \left( \frac{\log n}{\log T} \right) \int_{T}^{2 T} \log \zeta(\tfrac{1}{2}+it) \left( n^{i t} + n^{-i t} \right) \, dt \\
            &= -\frac{1}{\log T} \sum_{n \ge 2} \frac{\Lambda^2(n)}{n \log n} \hat \phi \left( \frac{\log n}{\log T} \right) + O \bigg( \frac{1}{T} \sum_{n \le T^{\eta}} \Lambda(n) \bigg)
        \end{split}
    \end{equation}
    using the support condition on $\hat \phi$ to restrict the sum over $n$ to be short enough. The error term in \eqref{CorrInterm} is more than acceptable by just using a trivial bound. As for the main term, an elementary summation by parts argument using the Prime Number Theorem shows that we have
    \[ -\frac{1}{\log T} \sum_{n \ge 2} \frac{\Lambda^2(n)}{n \log n} \hat \phi \left( \frac{\log n}{\log T} \right) = - \frac{1}{2} \int_0^{\eta} \hat \phi(x) \, dx + O \left( \frac{1}{\log T} \right) = - \frac{\phi(0)}{2} + O \left( \frac{1}{\log T} \right), \]
    where the last step follows from the support condition on $\hat \phi$ together with the assumption that $\hat \phi$ is even. We therefore deduce \eqref{CorrSTS} and hence the claim. 
\qed

\section{Approximating log-zeta and the sum over zeros}\label{SectionApproximations}
As a first step toward proving Theorems \ref{mainthm} and \ref{mainthm2}, we approximate the moments of $\log\zeta(\frac{1}{2}+it)$ with respect to the weighted measure $(N_\phi(t)-\hat\phi(0))^kdt$ by those of suitable Dirichlet polynomials.
For $N_\phi$ the natural approximation comes from \cref{ExplForm_SumOverZeros}. For $\log\zeta$ we use the following standard Dirichlet polynomial:
\begin{align}\label{4june.1}
P_x(t) := \sum_{p\leq x} \frac{1}{p^{1/2+it}},
\end{align}
where $x$ will be a small (fixed) power of $T$.
The desired approximation is obtained unconditionally in the following proposition, provided that the support of $\hat\phi$ is contained in $(-\tfrac{2}{k+2},\tfrac{2}{k+2})$. 

\begin{prop}\label{PropApprox}
    Let $T \ge 3$ be a (large) parameter and let $h,\ell,k$ be fixed nonnegative integers with $k$ even. Suppose that $\phi$ is a function satisfying \eqref{24may.1} for some $0<\eta<2/(k+2)$. Finally, let $0 < \delta_1 < \delta_2 := \tfrac{1}{100(h+\ell+1)(k+2)}$ be fixed and suppose that $T^{\delta_1} \le x \le T^{\delta_2}$. Then we have
    \begin{align*}
        \frac{1}{T} \int_T^{2T} &(\log\zeta(\tfrac{1}{2}+it))^h (\overline{\log\zeta(\tfrac{1}{2}+it)} )^\ell (N_\phi(t)-\hat\phi(0))^k dt \\
        &= \frac{1}{T}\int_T^{2T} P_x(t)^h \overline{P_x(t)}^\ell \Starphi(t)^k dt 
        +O\left( (\log\log T)^{(h+\ell-1)/2} \right).
    \end{align*}
\end{prop}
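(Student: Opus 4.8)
The plan is to carry out the two substitutions one at a time: first replace $N_\phi(t)-\hat\phi(0)$ by $\Starphi(t)$, and then replace $\log\zeta(\tfrac12+it)$ by $P_x(t)$. For the first step, \cref{Rem_SvsS*} gives $N_\phi(t)-\hat\phi(0)=\Starphi(t)+O(1/\log T)$ uniformly for $t\in[T,2T]$, so expanding the binomial $(N_\phi(t)-\hat\phi(0))^{k}$ reduces the task to bounding
\[
(\log T)^{-(k-j)}\,\frac1T\int_T^{2T}\bigl|\log\zeta(\tfrac12+it)\bigr|^{h+\ell}\,\bigl|\Starphi(t)\bigr|^{j}\,dt,\qquad 0\le j\le k-1.
\]
Applying H\"older's inequality with a conjugate pair $(p,q)$, $q>1$ chosen close enough to $1$ that $jq<k$, together with Selberg's classical moment bounds $\tfrac1T\int_T^{2T}|\log\zeta(\tfrac12+it)|^{2r}\,dt\ll_r(\log\log T)^r$ (see \cite{Selberg,Selberg2,Tsang1}) and the trivial inequality $|\Starphi|^{jq}\le 1+\Starphi^{k}$, each such quantity is
\[
\ll (\log\log T)^{(h+\ell)/2}\,\Bigl(1+\tfrac1T\int_T^{2T}\Starphi(t)^{k}\,dt\Bigr)^{1/q}(\log T)^{-1}=o(1),
\]
where $\tfrac1T\int_T^{2T}\Starphi(t)^{k}\,dt\ll 1$ by the explicit‑formula computation of \cite{HughesRudnick}, valid since $\eta<2/(k+2)<2/k$. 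This reduces \cref{PropApprox} to comparing $\tfrac1T\int_T^{2T}(\log\zeta)^h(\overline{\log\zeta})^\ell\Starphi^k\,dt$ with $\tfrac1T\int_T^{2T}P_x^h\overline{P_x}^\ell\Starphi^k\,dt$.

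For the second step (so now $h+\ell\ge1$, otherwise the two remaining integrals already agree) I would start from the algebraic inequality $|a^h\bar a^\ell-b^h\bar b^\ell|\ll_{h,\ell}|a-b|\,(|a|+|b|)^{h+\ell-1}$, applied with $a=\log\zeta(\tfrac12+it)$ and $b=P_x(t)$, which bounds the difference of these two integrals by
\[
\ll_{h,\ell}\ \frac1T\int_T^{2T}\bigl|\log\zeta(\tfrac12+it)-P_x(t)\bigr|\,\bigl(|\log\zeta(\tfrac12+it)|+|P_x(t)|\bigr)^{h+\ell-1}\,|\Starphi(t)|^{k}\,dt.
\]
To this I would apply H\"older's inequality, distributing $|\log\zeta-P_x|$ and each of the $h+\ell-1$ factors of size $|\log\zeta|+|P_x|$ into an $L^{a}$ norm and $|\Starphi|^{k}$ into an $L^{b}$ norm, where $a$ is the smallest even integer exceeding $(h+\ell)(k+2)/2$ and $\tfrac{h+\ell}{a}+\tfrac1b=1$. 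With the stated choice $\delta_2=\tfrac1{100(h+\ell+1)(k+2)}$ one checks $a\le 2(h+\ell+1)(k+2)$, hence $a\delta_2\le 1/50$ and $x^{a}\le T^{1/50}$, while $kb<k+2$. The three resulting factors are then estimated by: (i) the Selberg‑type moment bound $\bigl(\tfrac1T\int_T^{2T}|\log\zeta(\tfrac12+it)-P_x(t)|^{a}\,dt\bigr)^{1/a}=O_a(1)$, valid unconditionally because $x$ is a fixed (small) power of $T$ and $a$ is bounded; (ii) the bound $\bigl(\tfrac1T\int_T^{2T}(|\log\zeta(\tfrac12+it)|+|P_x(t)|)^{a}\,dt\bigr)^{1/a}\ll_a(\log\log T)^{1/2}$ for each of the $h+\ell-1$ middle factors, from Selberg's moment bound for $\log\zeta$ together with the standard mean value of the Dirichlet polynomial $P_x$, whose off‑diagonal is negligible since $x^{a}\le T^{1/50}$; and (iii) $\bigl(\tfrac1T\int_T^{2T}|\Starphi(t)|^{kb}\,dt\bigr)^{1/b}=O(1)$, because $kb<k+2$ and, again following \cite{HughesRudnick}, $\tfrac1T\int_T^{2T}\Starphi(t)^{k+2}\,dt\ll 1$ precisely when $\eta<2/(k+2)$. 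Multiplying, the difference of the two integrals is $\ll O(1)\cdot(\log\log T)^{(h+\ell-1)/2}\cdot O(1)$, which is the asserted bound.

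The support hypothesis $\eta<2/(k+2)$ enters the argument only through estimate (iii): in the H\"older splitting above, the $k$ copies of $\Starphi$ are necessarily raised to a power strictly larger than $k$ in order to leave room for the $\log\zeta$ and $P_x$ factors, and keeping that power at (essentially) $k+2$ is exactly what costs the loss of $2$ compared with the optimal threshold $\eta<2/k$ of \cite{HughesRudnick}. The purpose of taking $\delta_2$ so small is to ensure that the competing constraints — $a$ small enough for the Selberg‑type estimate (i) and the mean value (ii) to hold with $x\le T^{\delta_2}$, and $kb<k+2$ for (iii) — are simultaneously satisfiable, which the stated choices achieve with room to spare. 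The part I expect to be the main obstacle is estimate (i): the difference $\log\zeta-P_x$ is not small in any pointwise sense, since $\log\zeta(\tfrac12+it)$ has logarithmic singularities at the nontrivial zeros, so the $L^{a}$‑bound must rest on Selberg's classical approximation of $\log\zeta$ by a Dirichlet polynomial, valid outside an exceptional set whose measure is controlled via zero‑density estimates. By contrast, on the zeros side no conjugation subtleties arise — since $\hat\phi$ is even and real‑valued, $\Starphi$ is real — and the remaining ingredients reduce to routine mean values of Dirichlet polynomials and the classical moment bounds for $\log\zeta$.
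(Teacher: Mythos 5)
Your proposal is correct and follows essentially the same route as the paper: replace $N_\phi - \hat\phi(0)$ by $\Starphi$ via the explicit formula, then replace $\log\zeta$ by $P_x$ using the Selberg--Tsang moment bounds for the difference, with both errors controlled by H\"older's inequality together with the Hughes--Rudnick bound on the $(k+2)$-th moment of $\Starphi$. The paper packages the second step via a binomial expansion in $E_x := \log\zeta - P_x$ split into real and imaginary parts and a fixed H\"older triple with exponents $(k+2,\, k+2,\, (k+2)/k)$, while you use a difference-of-powers inequality and a tailored even integer $a$, but the underlying ingredients and the source of the restriction $\eta < 2/(k+2)$ are identical.
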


\begin{proof}
The approximation of $N_\phi(t)-\hat\phi(0)$ by $\Starphi(t)$ is a standard application of the explicit formula. Indeed, by applying \eqref{ExplFormS*} and expanding the $k$-th power, one has
    \begin{equation}\begin{split}\label{28feb.1}
        \frac{1}{T} \int_T^{2T} &(\log\zeta(\tfrac{1}{2}+it))^h (\overline{\log\zeta(\tfrac{1}{2}+it)})^\ell (N_\phi(t)-\hat\phi(0))^k dt \\
        &= \frac{1}{T}\int_T^{2T} (\log\zeta(\tfrac{1}{2}+it))^h (\overline{\log\zeta(\tfrac{1}{2}+it)})^\ell \Starphi(t)^k dt 
        +O(\mathcal E_1),
    \end{split}\end{equation}
    with 
    \begin{align*} 
        \mathcal E_1 
        \notag \ll \max_{1\leq s\leq k} \frac{1}{(\log T)^s} \frac{1}{T}\int_T^{2T} |\log\zeta(\tfrac{1}{2}+it)|^{h+\ell} |\Starphi(t)|^{k-s} dt .
    \end{align*}
    Since $k$ is even, by applying H\"older's inequality, we bound the above by
        \begin{equation}\begin{split}\label{22may.1}
        \ll \max_{1\leq s\leq k} \frac{1}{(\log T)^s} &\bigg(\frac{1}{T}\int_T^{2T} |\log\zeta(\tfrac{1}{2}+it)|^{2k(h+\ell)} dt \bigg)^{1/2k} 
        \bigg(\frac{1}{T}\int_T^{2T} \Starphi(t)^{k} dt \bigg)^{(k-s)/k}.
    \end{split}\end{equation}
    For the last factor, we simply need to show that the expression is bounded. Let $\omega \ge \tfrac{1}{2} \mathbbm{1}_{[0,1]}$ denote some nonnegative weight function of total mass $1$ with compactly supported Fourier transform. Then we have
    \begin{equation}\label{HRMomentBound}
        \frac{1}{T}\int_T^{2T} \Starphi(t)^{k} dt 
        \le \frac{2}{T} \int_{\mathbb{R}} \Starphi(t)^{k} \omega \left( \frac{t-T}{T} \right) \,dt
        = \frac{2}{T} \int_{\mathbb{R}} S_\phi(t)^{k} \omega \left( \frac{t-T}{T} \right) \,dt + O(1)
        \ll 1,
    \end{equation}
    by invoking \cite[Theorem 2.6]{HughesRudnick}.
    Next, since $|z| \leq |\Re z|+|\Im z|$, Selberg's central limit theorem ensures that the average involving $\log$-zeta in \eqref{22may.1} is bounded by $\ll (\log\log T)^{k(h+\ell)}$. Hence, 
    \begin{align}\label{28feb.2} 
        \mathcal E_1 
        \ll \max_{1\leq s\leq k} \frac{(\log\log T)^{(h+\ell)/2}}{(\log T)^s} 
        \ll \frac{(\log\log T)^{(h+\ell)/2}}{\log T}
        \ll 1.
    \end{align}
    We now approximate $\log\zeta(\frac{1}{2}+it)$ by $P_x(t)$ on average with respect to the weighted measure $\Starphi(t)^kdt$. We recall that Selberg-Tsang (see \cite[Theorem 5.1]{Tsang1}) proved that for any fixed $m$ and $T^{\delta_1} \le x \le T^{1/(100 m)}$, we have
    \begin{equation}\begin{split}\label{TsangErrorRevised}
        &\frac{1}{T}\int_T^{2 T} (\log|\zeta( \tfrac{1}{2}+it )| - \Re P_x(t))^{2 m} \, dt \ll 1,\\
        &\frac{1}{T}\int_T^{2 T} (\Im\log\zeta( \tfrac{1}{2}+it ) - \Im P_x(t))^{2 m} \, dt \ll 1.
    \end{split}\end{equation}
    This will be enough because the Gaussian moment estimate \cite[Theorem 1.1]{HughesRudnick} in particular implies that under our support condition on $\hat \phi$, the $(k+2)$-th moment of $\Starphi(t)$ is bounded. The approximation then follows from a routine calculation, as we will sketch now.
    Writing $E_x(t) := \log\zeta(\tfrac12+it) - P_x(t)$,
    we have
    \begin{align*}
        \frac{1}{T}\int_T^{2 T} P_x(t)^h \overline{P_x(t)}^\ell \Starphi(t)^k\, dt &= \frac{1}{T}\int_T^{2 T} ( \log \zeta(\tfrac12+ i t))^h \, (\overline{\log\zeta(\tfrac12+it)} )^\ell \Starphi (t)^k \, dt + O(\mathcal E_2),
    \end{align*}
    with
    \begin{align*}
        \mathcal E_2 &\ll \max_{\substack{s\leq h,r\leq \ell \\ s+r\geq 1}}
        \frac{1}{T} \int_T^{2 T}|\log\zeta( \tfrac12+ i t) |^{h-s} |E_x(t)|^s |\log\zeta(\tfrac12+it)|^{\ell-r} |E_x(t)|^r \Starphi(t)^k \, dt\\
        &\ll \max_{1\leq n \leq h+\ell}
        \frac{1}{T} \int_T^{2 T} |\log\zeta( \tfrac12+ i t)|^{h+\ell-n} \Big(|\Re E_x(t)|^{n} + |\Im E_x(t)|^n \Big) \Starphi(t)^k \, dt.
        \end{align*}
        An application of H\"older's inequality and the estimates \eqref{TsangErrorRevised} to bound the resulting moments of $\Re E_x$ and $\Im E_x$ then yields
        \begin{align*}
        \mathcal E_2 
        \ll &\max_{1\leq n \leq h+\ell} \left[ \frac{1}{T} \int_T^{2 T} |\log\zeta( \tfrac12+it)|^{(k+2)(h+\ell-n)} dt \right]^{1/(k+2)}  \left[ \frac{1}{T} \int_T^{2 T} \Starphi(t)^{k+2} dt \right]^{k/(k+2)}.
    \end{align*}
    The last factor here is bounded provided that $\eta<2/(2+k)$, for example by applying \eqref{ExplFormS*} backwards and then using \eqref{HRMomentBound} with the exponent $k$ replaced by $k+2$.
    Applying the classical Selberg's central limit theorem, we get $\mathcal E_2 \ll (\log\log T)^{(h+\ell-1)/2}$ as required.
\end{proof}

As one can see from the previous proof, the repeated application of H\"older's inequality is responsible for the condition $\eta<2/(k+2)$ in \cref{PropApprox}. To obtain full support $(-2/k,2/k)$, which is the natural limit in view of Hughes and Rudnick's result \cite{HughesRudnick}, one needs to be less wasteful when approximating $\log\zeta(\frac12+it)$ by $P_x(t)$ on weighted average. Assuming the Riemann Hypothesis, this can be achieved for the imaginary part of log-zeta by tweaking Selberg's classical argument, as shown by the following result. 

\begin{prop}\label{PropApproxUnderthe Riemann Hypothesis}
    Assume the Riemann Hypothesis. 
    Let $T \ge 3$ be a (large) parameter and let $\ell,k$ be fixed nonnegative integers with $k$ even. Suppose that $\phi$ is a function satisfying \eqref{24may.1} for some $0<\eta<2/k$. Finally, let $0 < \delta_1 < \delta_2 := \frac{1 - \eta k/2}{200 (\ell + 1)}$ and $T^{\delta_1} \le x \le T^{\delta_2}$. Then, we have
    \begin{align*}
        \frac{1}{T} \int_T^{2T} (\Im\log\zeta(\tfrac{1}{2}+it))^\ell (N_\phi(t)-\hat\phi(0))^k dt 
        &= \frac{1}{T}\int_T^{2T} (\Im P_x(t))^\ell \Starphi(t)^k dt \\
        &\quad+O \bigg( \max_{1\leq r\leq \ell} \bigg[\frac{1}{T}\int_T^{2T} (\Im P_x(t))^{2(\ell-r)} \Starphi(t)^k dt\bigg]^{1/2} \bigg).
    \end{align*}
\end{prop}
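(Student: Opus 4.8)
The plan is to first use the explicit formula to replace the weight $(N_\phi(t)-\hat\phi(0))^k$ by $\Starphi(t)^k$, then to write $\Im\log\zeta(\tfrac12+it)=\Im P_x(t)+\Im E_x(t)$ with $E_x(t):=\log\zeta(\tfrac12+it)-P_x(t)$ and control the resulting cross terms by Cauchy--Schwarz; the point is that, under the Riemann Hypothesis, this can be done using only the $k$-th moment of $\Starphi$ (rather than the $(k+2)$-th), which is exactly what allows the support condition to be weakened to $\eta<2/k$. For the first step I would argue as at the start of the proof of \cref{PropApprox}: inserting \eqref{ExplFormS*} and expanding the $k$-th power produces $\tfrac1T\int_T^{2T}(\Im\log\zeta(\tfrac12+it))^\ell\Starphi(t)^k\,dt$ plus an error $\ll\max_{1\le s\le k}(\log T)^{-s}\tfrac1T\int_T^{2T}|\Im\log\zeta(\tfrac12+it)|^\ell|\Starphi(t)|^{k-s}\,dt$. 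Since $k$ is even, H\"older with exponents $(k/s,\,k/(k-s))$ bounds the inner integral by $\big(\tfrac1T\int|\Im\log\zeta|^{\ell k/s}\big)^{s/k}\big(\tfrac1T\int\Starphi^{k}\big)^{(k-s)/k}$, which by Selberg's classical central limit theorem for the first factor and by $\eta<2/k$ for the second (via \eqref{ExplFormS*} and \eqref{HRMomentBound}) is $\ll(\log\log T)^{\ell/2}/\log T$. This is absorbed into the error term claimed in the proposition.

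Next I would expand $(\Im\log\zeta(\tfrac12+it))^\ell=\sum_{r=0}^\ell\binom\ell r(\Im P_x(t))^{\ell-r}(\Im E_x(t))^r$ inside the integral, which isolates the main term $\tfrac1T\int_T^{2T}(\Im P_x)^\ell\Starphi^k\,dt$ together with, for each $1\le r\le\ell$, a contribution bounded by $\tfrac1T\int_T^{2T}|\Im P_x|^{\ell-r}|\Im E_x|^r\Starphi^k\,dt$. Since $\Starphi^k\ge0$, Cauchy--Schwarz gives
\[\frac1T\int_T^{2T}|\Im P_x|^{\ell-r}|\Im E_x|^r\Starphi^k\,dt\le\Big(\frac1T\int_T^{2T}(\Im P_x)^{2(\ell-r)}\Starphi^k\,dt\Big)^{1/2}\Big(\frac1T\int_T^{2T}(\Im E_x)^{2r}\Starphi^k\,dt\Big)^{1/2},\]
so the proposition will follow once I show that $\tfrac1T\int_T^{2T}(\Im E_x(t))^{2r}\Starphi(t)^k\,dt\ll1$ for every $1\le r\le\ell$. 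This is the only place where the Riemann Hypothesis is used.

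To prove this last bound I would invoke a variant of Selberg's classical argument (see \cite{Selberg,Tsang1}): under the Riemann Hypothesis, for $t\in[T,2T]$ away from ordinates of zeros one has a pointwise estimate of the shape $|\Im E_x(t)|\ll 1+|\mathcal R(t)|+\tfrac1\delta N_g(t)$, where $\mathcal R$ is a real Dirichlet polynomial supported on prime powers $n\le x^3=T^{3\delta}$ with square-summable coefficients of $\ell^2$-norm $O(1)$ (coming from the smoothed prime powers in Selberg's formula and from moving the line of integration from $\tfrac12$ to $\tfrac12+\tfrac1{\log x}$), and $N_g(t)=\sum_\gamma g(\tfrac{\log T}{2\pi}(\gamma-t))$ for a fixed nonnegative even $g$ satisfying \eqref{24may.1} with $\supp\hat g\subseteq(-C\delta,C\delta)$ ($C$ absolute) chosen to majorize the zero kernel occurring in Selberg's formula, the quadratic decay of that kernel being what forces the zeros with $|\gamma-t|\gg1/\log x$ to contribute only $O(1)$. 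Expanding the $2r$-th power, using $|\mathcal R|^a\le1+(1+\mathcal R^2)^\ell$ for $a\le2\ell$, and rewriting both $\Starphi$ (by \eqref{ExplFormS*}) and $N_g$ (by \cref{ExplForm_SumOverZeros}) as sums over prime powers, one reduces $\tfrac1T\int(\Im E_x)^{2r}\Starphi^k\,dt$ to a bounded linear combination of integrals $\tfrac1T\int_T^{2T}\prod_i n_i^{\pm it}\,dt$ in which the $n_i$ coming from $\mathcal R$ and from the prime-power expansion of $N_g$ (at most $C'\ell$ of them) satisfy $n_i\le T^{C''\delta}$, while the $k$ factors coming from $\Starphi$ satisfy $n_i\le T^\eta$. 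Thus $\prod_i n_i\le T^{C_1\ell\delta+k\eta}$ for an absolute constant $C_1$, and since $\eta<2/k$ and $\delta\le\delta_2=\frac{1-\eta k/2}{200(\ell+1)}$ this exponent is $<2$ (which the factor $200(\ell+1)$ is chosen to guarantee, for any absolute $C_1<400$). Consequently the off-diagonal terms contribute $o(1)$ via $\tfrac1T\int_T^{2T}m^{it}\,dt\ll\min(1,(T|\log m|)^{-1})$, exactly as in \cite{HughesRudnick} (and as carried out in \cref{SectionMomentEstimates}), while the diagonal is $O(1)$ because all the coefficient sequences involved are square-summable with $\ell^2$-norm $O(1)$. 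This yields $\tfrac1T\int(\Im E_x)^{2r}\Starphi^k\,dt\ll1$, completing the proof.

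I expect the main obstacle to be the third step, in two respects. First, one needs the pointwise Selberg bound for $\Im E_x$ with genuinely quadratic (not merely linear) decay in the zero contribution, so that distant zeros are harmless and the zero term can be dominated by a single linear statistic $N_g$ of tiny Fourier support; this is precisely the reason Selberg's smoothed coefficients $\Lambda_x$ are needed rather than a crude contour shift. Second, one needs the length bookkeeping showing that $C_1\ell\delta+k\eta<2$, which works only because the hypothesis $\eta<2/k$ leaves a positive margin $2-k\eta$ into which the $O(\ell\delta)$ contribution of the $O(\ell)$ auxiliary Dirichlet factors fits when $\delta$ is as small as $\delta_2$. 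Everything else --- the explicit formula, H\"older, Cauchy--Schwarz, and the diagonal estimate --- is routine.
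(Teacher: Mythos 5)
Your overall reduction — replace the weight by $S_\phi^*(t)^k$ via the explicit formula, expand $\Im\log\zeta = \Im P_x + \Im E_x$, and use Cauchy--Schwarz so that the proposition follows once $\tfrac1T\int_T^{2T}(\Im E_x)^{2r}S_\phi^*(t)^k\,dt\ll 1$ for $1\le r\le\ell$ — matches the paper's strategy exactly. The difference, and where a genuine gap opens up, is in the step you yourself flag as the main obstacle: controlling the zero contribution to $\Im E_x$.

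The paper does \emph{not} introduce an auxiliary zero statistic $N_g$. Instead it invokes Selberg--Tsang's RH approximation formula
$\log\zeta(\tfrac12+it)-P_x(t)=A_1+A_2+A_3-L+O(R)$, together with the crucial pointwise cancellation estimate $\Im L(t)\ll R(t)$ (Tsang's thesis, (5.19)--(5.20)). This single inequality eliminates the zeros entirely: $R(t)$ is, up to the constant $\log T/\log x=1/\delta$, a genuine Dirichlet polynomial of length $x^3=T^{3\delta}$, so after taking imaginary parts in the approximation formula one is left with $|\Im E_x|\ll |A_1|+|A_2|+|A_3|+R$, a sum of four honest Dirichlet polynomials. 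The weighted moment bound \eqref{23may.1} then follows by writing $S_\phi^*(t)^k$ as $|\cdot|^2$ (here $k$ even is used) and applying the classical mean value theorem, exactly as you describe in the last part of your second step — no zero correlations, no majorants. The role of RH is subtle but precise: it is what makes $A_1,A_2,A_3,R$ genuine Dirichlet polynomials (the $\sigma_{x,t}$'s become constant), as the paper records in the remark following the proof.

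Your replacement of $\Im L\ll R$ by the majorization $|\Im E_x|\ll 1+|\mathcal R|+\tfrac1\delta N_g$ is a genuinely different (and harder) route, and it is not established. The estimate is only stated ``of the shape'', the normalization is unclear (with a Beurling--Selberg majorant $g$ of the Selberg kernel one has $\hat g(0)\asymp 1/\delta$, so $\tfrac1\delta N_g$ is typically $\asymp 1/\delta^2$ — bounded since $\delta$ is fixed, but far more wasteful than the paper's $O(1/\delta)$), and more importantly it bypasses precisely the cancellation in $\Im L$ that Selberg exploits and that distinguishes the imaginary part from the real part. Indeed, the paper's \cref{RemarkImvsRe} explains that $\Re L(t)$ only satisfies $\Re L(t)\ll R(t)(1+\log^+(1/\nu_t\log x))$, a bound with a genuine singularity at zeros that no smooth $N_g$ can dominate; a majorization of the form you propose, if it held as stated, would apply equally to $\Re E_x$ and contradict that remark. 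So while your length bookkeeping $C_1\ell\delta+k\eta<2$ and the subsequent mean value estimate are the right mechanism, the pointwise input is the missing piece, and the paper's way to supply it is not a Beurling--Selberg construction but the specific Selberg inequality $\Im L\ll R$.
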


\begin{rem}\label{Remark_relativeerror}
    The moment estimate that we will prove in \cref{PropMomentPart2alt} implies, in particular, the crude upper bound
    \begin{align*}
        \frac{1}{T}\int_T^{2T} (\Im P_x(t))^{2(\ell-r)} \Starphi(t)^k dt \ll (\log\log T)^{\ell-r}.
    \end{align*}
    This guarantees that the error term in the statement of \cref{PropApproxUnderthe Riemann Hypothesis} is $\ll (\log\log T)^{(\ell-1)/2}$.
\end{rem}

\begin{proof}[Proof of \cref{PropApproxUnderthe Riemann Hypothesis}]
    Following the same argument as in the previous proof (cf. Equations \eqref{28feb.1} and \eqref{28feb.2}), one has
    \begin{align}\label{22may.2}
        \frac{1}{T} \int_T^{2T} (\Im\log\zeta(\tfrac12+it))^\ell (N_\phi(t)-\hat\phi(0))^k dt
        = \frac{1}{T}\int_T^{2T} (\Im\log\zeta(\tfrac12+it))^\ell \Starphi(t)^k dt 
        +O(1).
    \end{align}
    Assume the Riemann Hypothesis and denote $\rho=\frac12+i\gamma$ with $\gamma\in\mathbb R$ the nontrivial zeros of zeta. Then, Selberg-Tsang's classical approximation formula reads (see \cite{Tsang1}, equation (5.15)):
    \begin{align}\label{approx1} 
        \log \zeta(\tfrac12+it)- P_x(t) = A_1(t) +A_2(t) +A_3(t) -L(t) + O(R(t)) 
    \end{align}
    for $T\leq t\leq 2T$ with $t\neq\gamma$, where
    \begin{align*}
        &A_1(t):=\sum_{p\leq x}\big ( p^{-1/2-4/\log x}-p^{-1/2}\big )p^{-it},
        \quad\quad A_2(t):=\sum_{\substack{p^r\leq x \\ r\geq 2}}\frac{p^{-r(1/2+4/\log x+it)}}{r},\\
        &A_3(t):=\sum_{x<n\leq x^3}\frac{\Lambda(n)}{\log n}n^{-1/2-4/\log x-it},
        \quad\quad R(t):= \frac{1}{\log x}\bigg ( \bigg | \sum_{n\leq x^3}\frac{\Lambda (n)}{n^{1/2+4/\log x+it}} \bigg |+\log T \bigg ), \\
        &L(t):=\sum_\rho \int_{1/2}^{1/2+4/\log x}\left ( \frac{1}{2}+\frac{4}{\log x}-u \right )\frac{1}{u+it-\rho}\frac{1}{\frac{1}{2}+\frac{4}{\log x}-\rho}du.
    \end{align*}
    
    We also recall the following estimate from Selberg's work (see \cite[p. 71]{Tsang1}, Equations (5.19)-(5.20) and subsequent display):
    \begin{align}\label{30May.1}
        \Im L(t) &\ll R(t).    
    \end{align}
    Therefore, it suffices to bound the moments of $A_1,A_2,A_3,$ and $R$ with respect to the measure $\Starphi(t)^k dt$. More precisely, we will show that
    \begin{align}\label{23may.1}
    \frac{1}{T}\int_T^{2T} \Big(|A_1(t)|^{2h}+|A_2(t)|^{2h}+|A_3(t)|^{2h}+R(t)^{2h}\Big) \Starphi(t)^{k} dt \ll 1,
    \end{align}
    for every $h\leq 2\ell$.
    Then, upon taking the imaginary part in \eqref{approx1} and plugging it into \eqref{22may.2}, the claim of \cref{PropApproxUnderthe Riemann Hypothesis} follows from the above bound (together with the Cauchy-Schwarz inequality).

    We now embark on the proof of \eqref{23may.1}; for the sake of brevity, we write $k=2m$. We start with $A_1$, for which we write
    \begin{align*}
        \frac{1}{T}\int_T^{2T} |A_1(t)|^{2h} \Starphi(t)^{k}  dt
        &\leq \frac{2^{2m}}{T(\log T)^{2m}}\int_T^{2T} \bigg | A_1(t)^h \bigg( \sum_{n} \frac{\Lambda^*(n)}{n^{1/2+it}} \hat\phi \bigg( \frac{\log n}{\log T} \bigg) \bigg)^m \bigg|^2 dt\\
        &=: \frac{2^{2m}}{T(\log T)^{2m}}\int_T^{2T} \bigg| \sum_{\ell=1}^{\infty}\frac{c_\ell}{\ell^{1/2+it}} \bigg|^{2} dt,
    \end{align*}
    where
    \begin{align*}
        c_{\ell}=\sum_{\substack{p_1\cdots p_h n_1\cdots n_m=\ell \\ p_1, \dots, p_h\leq x\\n_1, \dots, n_m \ge 1 }}\Big(p_1^{-4/\log x}-1 \Big) \cdots \Big(p_h^{-4/\log x}-1 \Big)
    \Lambda^*(n_1) \cdots \Lambda^*(n_m) \hat\phi\bigg(\frac{\log n_1}{\log T}\bigg) \cdots \hat\phi\bigg(\frac{\log n_m}{\log T}\bigg).
    \end{align*}
    Note that, with our choice of $x$, we have $c_\ell=0$ unless $\ell\leq T$. Therefore, an application of the classical mean value theorem for Dirichlet polynomials (see e.g \cite[Chapter 9]{IKowalski}) gives
    \begin{align}
        &\notag\frac{1}{T}\int_T^{2T} |A_1(t)|^{2h} \Starphi(t)^{k}  dt
        \ll \frac{1}{(\log T)^{2m}}  \sum_{\ell=1}^\infty \frac{|c_\ell|^2}{\ell}\\
        &\notag 
        \ll \frac{1}{(\log x)^{h} (\log T)^{2 m}}  \sum_{\substack{p_1,\dots p_h\leq x \\ n_1,\dots n_m \leq T^\eta}} \frac{\log p_1 \cdots \log p_h}{p_1\cdots p_h} 
        \frac{\Lambda^*(n_1) \cdots \Lambda^*(n_m)}{n_1\cdots n_m}  \times \\ &\notag \qquad \qquad \qquad \times \sum_{\substack{ \tilde n_1,\dots,\tilde n_m \leq T^\eta \\ \tilde p_1,\dots,\tilde p_h \leq x}} \Lambda^*(\tilde n_1) \cdots \Lambda^*(\tilde n_m) \times \mathbbm{1}(p_1\cdots p_hn_1\cdots n_m = \tilde p_1\cdots \tilde p_h \tilde n_1\cdots \tilde n_m),
    \end{align}
    by employing $1-p_i^{-4/\log x} \ll \log p_i/\log x$ and $1-\tilde p_i^{-4/\log x} \ll1$. Since $p_1,\dots,p_h,n_1,\dots,n_m$ are primes or squares of primes,  there are only a bounded number of $\tilde p_1,\cdots, \tilde p_h, \tilde n_1,\cdots, \tilde n_m$ such that the equality $p_1\cdots p_hn_1\cdots n_m = \tilde p_1\cdots \tilde p_h \tilde n_1\cdots \tilde n_m$ holds. As a consequence, using the bound $\Lambda^*(\tilde n_j) \ll \log T$, we see that the innermost sum satisfies
    \[ \sum_{\substack{ \tilde n_1,\dots,\tilde n_m \leq T^\eta \\ \tilde p_1,\dots,\tilde p_h \leq x}} \Lambda^*(\tilde n_1) \cdots \Lambda^*(\tilde n_m) \times \mathbbm{1}(p_1\cdots p_hn_1\cdots n_m = \tilde p_1\cdots \tilde p_h \tilde n_1\cdots \tilde n_m) \ll (\log T)^m.\]
    Therefore, we finally obtain
    \begin{align*}
        &\notag\frac{1}{T}\int_T^{2T} |A_1(t)|^{2h} \Starphi(t)^{k}  dt
        \ll \frac{1}{(\log x)^h (\log T)^{m}}  \bigg(\sum_{p\leq x } \frac{\log p}{p} \bigg)^h \bigg( \sum_{n \leq T^\eta}
        \frac{\Lambda(n)}{n}\bigg)^m 
        \ll 1,
    \end{align*}
    as desired.

    We now move to $A_2$. Since
    \begin{align*}
        A_2(t)=\frac{1}{2}\sum_{\substack{q\leq x\\ q=p^2, \; p \text{ prime}}} \frac{q^{-4/\log x}} {q^{1/2+it}}+ O(1),
    \end{align*}
    to show that the $2h$-th weighted moment of $A_2$ is bounded, it suffices to show that
    \begin{align}\label{27may.1}
        \frac{1}{T(\log T)^{k}} \int_T^{2T} \bigg|\sum_{\substack{q\leq x\\ q=p^2, \; p \text{ prime}}} \frac{q^{-4/\log x}} {q^{1/2+it}} \bigg|^{2h} 
        \bigg| \sum_{n} \frac{\Lambda^*(n)}{n^{1/2+it}} \hat\phi \bigg(\frac{\log n}{\log T} \bigg) \bigg|^{k} dt 
        \ll 1. 
    \end{align}
    The bound $\frac{1}{T}\int_T^{2T} |A_2(t)|^{2h} \Starphi(t)^kdt \ll 1$ then follows by Cauchy-Schwarz.
    The bound in \eqref{27may.1} can be proven with the same strategy used for $|A_1|$. Specifically, denoting
    \begin{align*}
        d_\ell:=
        \sum_{\substack{ n_1,\dots,n_m\leq T^\eta \\ p_1,\dots,p_h\leq \sqrt{x} \\ p_1^2 \dots p_h^2 n_1\dots n_m=\ell}}
        \Lambda^*(n_1)\cdots \Lambda^*(n_m) p_1^{-8/\log x} \cdots p_h^{-8/\log x},
    \end{align*}
    by an application of the mean value theorem, the left-hand side of \eqref{27may.1} can be bounded by (again, we write $k=2m$)
    \begin{align*}
        &\ll \frac{1}{(\log T)^{2m}} \sum_{\ell} \frac{|d_\ell|^2}{\ell}
        \leq \frac{1}{(\log T)^{2m}} \sum_{\ell} \frac{1}{\ell} \bigg( \sum_{\substack{ n_1,\dots,n_m\leq T^\eta \\ p_1,\dots,p_h\leq \sqrt{x} \\ p_1^2 \dots p_h^2 n_1\dots n_m=\ell}}\Lambda^*(n_1)\cdots \Lambda^*(n_m)\bigg)^2 \\
        &\ll \frac{1}{(\log T)^{m}} \bigg(\sum_{p \leq \sqrt{x} } \frac{1}{p^2}\bigg)^h \bigg( \sum_{n\leq T^\eta} \frac{\Lambda(n)}{n}\bigg)^{m} \ll 1.
    \end{align*}
    Note that in the third step above, we employed the same prime divisor argument used for $A_1$.

    By a similar calculation, one can also easily bound the weighted moments of $A_3$ and $R$. Indeed, noting that both the support of the Dirichlet polynomials in the definitions of $A_3$ and $R$ can be truncated to primes and squares of primes up to a $O(1)$ error, one has
    \begin{equation*}\begin{split}
        \frac{1}{T}\int_T^{2T} |A_3(t)|^{2h} \Starphi(t)^k &\ll  \bigg( \sum_{x<n<x^3} \frac{\Lambda(n)}{n\log n} \bigg)^h \bigg( \frac{1}{\log T} \sum_{n\leq T^\eta} \frac{\Lambda(n)}{n} \bigg)^{k/2} \ll 1, \\
        \frac{1}{T}\int_T^{2T} |R(t)|^{2h} \Starphi(t)^k &\ll \bigg( \frac{1}{\log x} \sum_{n<x^3} \frac{\Lambda(n)}{n} \bigg)^{2h} \bigg(\frac{1}{\log T} \sum_{n\leq T^\eta} \frac{\Lambda(n)}{n} \bigg)^{k/2} \ll 1.
    \end{split}\end{equation*}
\end{proof}

\begin{rem}
    In the proof of \cref{PropApproxUnderthe Riemann Hypothesis}, the Riemann Hypothesis is invoked in a somewhat subtle way to prove \eqref{23may.1}. Unconditionally, the terms $A_1,A_2,A_3,$ and $R$ on the right-hand side of \eqref{approx1} are not Dirichlet polynomials, as the coefficients depend (mildly) on $t$. As a result, the classical mean value theorem does not seem sufficient to analyze their weighted moments, and some further information on the behavior of $\Starphi$ in short intervals appears necessary.
\end{rem}

\begin{rem}\label{RemarkImvsRe}
    The same proof presents complications when applied to $\log|\zeta(\frac12+it)|$. The difficulty arises in the analogue of \eqref{30May.1} for $\Re L(t)$; see \cite{Tsang1}, Equations (5.19)-(5.21) and subsequent paragraphs, up to the first display on page 74. Assuming the Riemann Hypothesis, we have the following bound for the real part:
    \begin{align*}
        \Re L(t) &\ll R(t)\bigg( 1 + \log^+\bigg(\frac{1}{\nu_t\log x}\bigg)\bigg),
    \end{align*}
    where $\nu_t:= \min_\gamma|\gamma-t|$ and $\log^+(x)=\max(0,\log x)$. When bounding the moments of $\Re L(t)$ with respect to the measure $\Starphi(t)^k dt$, the dependence on the nontrivial zeros via $\nu_t$ seems to require information about the $(k+1)$-correlation of the zeros, rather than just the $k$-correlation. Indeed, one can in fact invoke work of Rudnick-Sarnak \cite{RudnickSarnak} to bound the contribution involving $\log^+(1/\nu_t \log x)$, even unconditionally when looking for an order of magnitude upper bound. But this introduces an additional zero and hence relies on $(k+1)$-correlations, requiring $\eta < 2/(k+1)$.
\end{rem}

\section{Moment estimates}\label{SectionMomentEstimates}

In this section, we compute the mixed moments of the Dirichlet polynomials that approximate $\log\zeta(\frac12+it)$ and $\Starphi(t)$; see Equation \eqref{4june.1} and \cref{ExplForm_SumOverZeros} for the definitions. For the reader's convenience, we split the proof into two parts. In \cref{prop:lkmoments}, we analyze the integral over $t$, and show that the main term comes from the \lq\lq diagonal contribution\rq\rq, cf. Equation \eqref{eq:MhkDiag}.

\begin{prop}\label{prop:lkmoments}
    Let $T \ge 3$ be a (large) parameter and let $h,\ell,k$ be fixed nonnegative integers. Set $x = T^{\varepsilon}$ for some sufficiently small constant $\varepsilon = \varepsilon(h,\ell,k,\eta) > 0$ and define 
    \begin{align*}
           M(h,\ell,k) = M(h, \ell,k; T) := \frac{1}{T}\int_T^{2 T} P_x(t)^{h} \overline{P_x(t)}^{\ell} \Starphi(t)^k\, dt.
    \end{align*}
    Then, for some sufficiently small constant $\delta>0$, we have
    \begin{equation}\begin{split}\label{eq:MhkDiag}
        M(h,\ell, k) = \frac{(-1)^k}{(\log T)^k} 
        \sum_{J \subseteq \{1, \dots, k \}} \sum_{\substack{p_1, \dots, p_h\leq x\\
        q_1, \dots, q_\ell\leq x\\ n_1, \dots, n_k \ge 1 }} &\frac{\Lambda^*(n_1) \cdots \Lambda^*(n_k)}{\sqrt{p_1\cdots p_h q_1\dots q_\ell n_1 \cdots n_k}} \prod_{j=1}^k  \hat\phi\left(\frac{\log n_j}{\log T}\right) \times \\
        &\times\mathbbm{1} \bigg( \prod_{i\leq h} p_i \prod_{j \in J} n_j = \prod_{i\leq \ell} q_i \prod_{j\in J^c} n_j \bigg) + O (T^{-\delta}).
    \end{split}\end{equation}
\end{prop}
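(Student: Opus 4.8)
The plan is to expand everything into a multiple Dirichlet series in the variable $t$, carry out the integration over $[T,2T]$, and show that only the ``diagonal'' frequencies survive up to a power-saving error. First I would write $P_x(t)^h\overline{P_x(t)}^\ell$ as a sum over tuples $(p_1,\dots,p_h,q_1,\dots,q_\ell)$ of primes $\le x$, each term being $(p_1\cdots p_h q_1\cdots q_\ell)^{-1/2}$ times $(p_1\cdots p_h)^{-it}(q_1\cdots q_\ell)^{it}$. Similarly, expanding the $k$-th power of $\Starphi(t)$ and writing each $\cos(t\log n_j)=\tfrac12(n_j^{it}+n_j^{-it})$ produces, after distributing, a sum over subsets $J\subseteq\{1,\dots,k\}$ (recording which $n_j$ contributes $n_j^{-it}$, say) and over tuples $(n_1,\dots,n_k)$ with weight $\frac{(-1)^k}{(\log T)^k}\prod_j \frac{\Lambda^*(n_j)}{\sqrt{n_j}}\hat\phi(\tfrac{\log n_j}{\log T})$; the factor $2^{-k}$ from the cosines cancels against the $(-2/\log T)^k$ prefactor, leaving exactly the constant $\frac{(-1)^k}{(\log T)^k}$ displayed. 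The frequency attached to such a term is $n^{it}$ with $n=\big(\prod_{i\le h}p_i\prod_{j\in J}n_j\big)\big/\big(\prod_{i\le\ell}q_i\prod_{j\in J^c}n_j\big)$ (a positive rational).

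Next I would integrate term by term over $t\in[T,2T]$. For each term the integral is $\int_T^{2T} (n_1^*/n_2^*)^{it}\,dt$ where $n_1^*,n_2^*$ are the two products above; this equals $T$ exactly when $n_1^*=n_2^*$, i.e. when the indicator in \eqref{eq:MhkDiag} holds, and otherwise is $O\big(1/|\log(n_1^*/n_2^*)|\big)$. The key quantitative input is that, because of the support condition on $\hat\phi$ and the truncation $p_i,q_i\le x=T^\varepsilon$, every $n_j$ with nonzero contribution satisfies $n_j\le T^{\eta}$, so $n_1^*,n_2^*\le x^{\max(h,\ell)}T^{\eta k}\le T^{C\varepsilon+\eta k}$; choosing $\varepsilon$ small (depending on $h,\ell,k,\eta$) keeps this below, say, $T^{1/2}$. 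For distinct positive integers $a,b$ of size $\le T^{1/2}$ one has $|\log(a/b)|\gg 1/\max(a,b)\gg T^{-1/2}$, so each off-diagonal term contributes $O(T^{1/2})$; there are $O(T^{C\varepsilon})$ many terms (the number of tuples), and after dividing by $T$ and accounting for the $(\log T)^{-k}$ and the $(p_1\cdots q_\ell n_1\cdots n_k)^{-1/2}\ge T^{-C\varepsilon}$ factors, the total off-diagonal contribution is $O(T^{-1/2+C\varepsilon})\ll T^{-\delta}$ for suitably small $\delta,\varepsilon$. Summing the diagonal terms (each giving $T$, so the $1/T$ cancels) reproduces precisely the main term of \eqref{eq:MhkDiag}.

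The main obstacle — really the only point requiring care — is bookkeeping: making sure the combinatorial constants from expanding the cosines and from the subset decomposition collapse exactly to $\frac{(-1)^k}{(\log T)^k}$ with the stated indicator, and verifying that the a priori bound $n_j\le T^\eta$ (which comes from $\hat\phi(\tfrac{\log n_j}{\log T})=0$ once $\log n_j/\log T\ge\eta$) together with the choice of $\varepsilon$ genuinely forces all frequencies $n_1^*/n_2^*$ to be either equal or separated by $\gg T^{-1/2}$. No cancellation among off-diagonal terms is needed; the trivial bound on the number of tuples (polynomial in $T$ with exponent $O(\varepsilon)$) suffices, which is why $\varepsilon$ must be chosen small in terms of $h,\ell,k,\eta$. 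I would also remark that $\Lambda^*(n_j)\le\log T$ and the prime/prime-square structure of the support are not needed at this stage (they enter only in Proposition-level estimates afterwards), so the argument here is a clean application of the mean value / orthogonality of $t\mapsto n^{it}$ on a dyadic interval.
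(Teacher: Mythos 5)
Your expansion and bookkeeping (the cancellation $(-2/\log T)^k\cdot 2^{-k}=(-1)^k/(\log T)^k$, the role of the subset $J$, the indicator in the main term) are correct and match the paper's setup exactly, but the quantitative treatment of the off-diagonal terms has a genuine gap. The claim that ``there are $O(T^{C\varepsilon})$ many terms'' is false: the $n_j$ range up to $T^{\eta}$ because $\hat\phi(\log n_j/\log T)=0$ only once $\log n_j>\eta\log T$, so the number of tuples is of order $x^{h+\ell}T^{\eta k}=T^{(h+\ell)\varepsilon+\eta k}$, and $\eta$ is a \emph{fixed} constant that may be taken arbitrarily close to $2/k$ (this is the range the proposition must cover, since it feeds into Proposition~\ref{PropApproxUnderthe Riemann Hypothesis}). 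For the same reason, $n_1^*,n_2^*$ can be as large as $T^{C\varepsilon+\eta k}$, and since $\eta k$ can exceed $1/2$ (indeed approach $2$), no choice of $\varepsilon$ forces them below $T^{1/2}$; the separation $|\log(n_1^*/n_2^*)|$ can only be bounded below by $T^{-C\varepsilon-\eta k}$. Plugging these corrected bounds into your estimate gives an off-diagonal contribution of order $T^{-1+C'\varepsilon+2\eta k}$ (or $T^{-1+C'\varepsilon+3\eta k/2}$ if you keep the $(\cdots)^{-1/2}$ factor), which is \emph{not} $O(T^{-\delta})$ once $\eta k$ exceeds $1/2$ (resp.\ $2/3$); the trivial bound is simply not strong enough in the relevant range.

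The paper closes this gap by splitting according to whether $P(J):=\prod q_i\prod_{j\in J^c}n_j/(\prod p_i\prod_{j\in J}n_j)$ lies in $[\tfrac12,2]$. For $P(J)\notin[\tfrac12,2]$ the integral is $O(1)$ and the trivial sum $\ll x^{(h+\ell)/2}T^{\eta k/2}/T$ suffices (this needs $\eta<2/k$). For the near-diagonal range $P(J)\in[\tfrac12,2]\setminus\{1\}$ one writes the two products as differing by a nonzero integer $h$, and observes that
\[
\frac{1}{\sqrt{p_1\cdots p_h q_1\cdots q_\ell n_1\cdots n_k}}\cdot\frac{1}{|\log P(J)|}\ll\frac{1}{|h|},
\]
since $P(J)\asymp 1$. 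One may assume WLOG $|J^c|\le k/2$; then fixing $(q_i)$, $(n_j)_{j\in J^c}$, and $h$ determines the product $\prod p_i\prod_{j\in J}n_j$, and a divisor-type bound shows there are $\ll T^{\varepsilon}$ admissible factorizations into $(p_i)$ and $(n_j)_{j\in J}$. Summing $1/|h|$ over $0\ne|h|\le T$ gives a factor $\log T$, and the number of $(q_i,(n_j)_{j\in J^c})$ is $\ll x^{\ell}T^{\eta|J^c|}\le T^{C\varepsilon+\eta k/2}$, yielding $\ll T^{1-\delta}$ for $\eta<2/k$. The two ideas you are missing are precisely this divisor-counting reduction (so that one only ``pays'' for the smaller side of the equation) and the WLOG $|J^c|\le k/2$; without them the argument cannot reach the full support range required by the theorems.
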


\begin{proof}
    By definition of $P_x$ and $\Starphi$, writing cosines as exponentials and multiplying out, we get
    \begin{equation}\begin{split}\label{eq:MhkDef}
        M(h,\ell, k) = \frac{(-1)^k}{(\log T)^k} \sum_{J \subseteq \{1, \dots, k \}} &\sum_{\substack{p_1, \dots, p_h  \leq x\\
        q_1, \dots, q_\ell\leq x\\
        n_1, \dots, n_k \ge 1}} \frac{\Lambda^*(n_1) \cdots \Lambda^*(n_k)}{\sqrt{p_1\cdots p_h q_1\dots q_\ell n_1 \cdots n_k }} \times \\ \times &\prod_{j=1}^k \hat\phi\left(\frac{\log n_j}{\log T}\right) \frac{1}{T} \int_T^{2 T} \left( \frac{\prod_{i\leq \ell} q_i \prod_{j \in J^c} n_j}{\prod_{i\leq h} p_i \prod_{j \in J} n_j} \right)^{i t} \, dt.
    \end{split}\end{equation}
    To deduce the claim, we first recall the standard bound
    \begin{equation}\label{eq:xitBound}
        \int_T^{2T} x^{it} \, dt \ll \frac{1}{| \log x |}
    \end{equation} 
    for $x \ne 1$, which follows immediately from integrating.
    Let us fix $J\subseteq\{1, \dots, k\}$  and denote $$P(J):=\frac{\prod_{i\leq \ell} q_i \prod_{j \in J^c} n_j}{\prod_{i\leq h} p_i \prod_{j \in J} n_j}.$$ 
    We split the range of summation in \eqref{eq:MhkDef} according to whether
    \begin{equation}\label{eq:QuotNear1}
        \tfrac{1}{2} \le P(J)  \le 2
    \end{equation}
    holds. 
    
    If the inequality \eqref{eq:QuotNear1} does not hold, then by \eqref{eq:xitBound} the integral appearing in \eqref{eq:MhkDef} is bounded. The contribution from such tuples to \eqref{eq:MhkDef} is thus
    \begin{align*}
        \ll \frac{1}{T} \sum_{J \subseteq \{1, \dots, k \}} \sum_{\substack{p_1, \dots, p_h\leq x \\
        q_1, \dots, q_\ell\leq x\\
        n_1, \dots, n_k \le T^\eta
        }} \frac{1}{\sqrt{p_1\cdots p_h  q_1\dots q_\ell n_1 \cdots n_k}} 
        \ll \frac{x^{(h+\ell)/2}T^{\eta k/2}}{T}
        \ll T^{-\delta},
    \end{align*}
    for some fixed small $\delta>0$.
    We can hence restrict the sum in \eqref{eq:MhkDef} to tuples satisfying \eqref{eq:QuotNear1}, up to an error of $\ll T^{-\delta}$. Clearly, the case $P(J)=1$
    corresponds to the main term in \eqref{eq:MhkDiag}. 
    
    For the remaining range $P(J)\in[\frac12,2] \setminus \{ 1 \}$, employing \eqref{eq:xitBound}, it thus suffices to show that, for any $J \subseteq \{1,\dots,k\}$,
    \begin{equation}\label{eq:QuotNear1STS} 
        \sum_{\substack{p_1, \dots, p_h  \leq x\\ q_1, \dots, q_\ell\leq x \\ n_1, \dots, n_k \le T^\eta}} 
        \frac{1}{\sqrt{p_1\dots p_h q_1\dots q_\ell n_1\dots n_k}} \frac{\mathbbm{1} \left( P(J) \in \left[ \tfrac{1}{2}, 2 \right] \setminus \{1 \}  \right)}{\left| \log P(J) \right|}  
        \ll T^{1-\delta}.
    \end{equation}
    Note that we may assume without loss of generality that $|J| \ge \frac{k}{2}$; otherwise, one can simply exchange $J$ with $J^c$ and swap the roles of $q_i$ and $p_i$ in the following argument. 
    Now, writing
    \begin{align}\label{eq:shiftbyh}
        \prod_{i\leq h} p_i \prod_{j\in J} n_j=\prod_{i\leq \ell} q_i \prod_{j\in J^c} n_j +h
    \end{align}
    for some nonzero integer $h$ with $-T\leq h\leq T$, we have
    \begin{align*}
        \frac{1}{\vert \log P(J)\vert}\ll \frac{\prod_{i\leq h} p_i \prod_{j\in J} n_j}{\vert h\vert}.
    \end{align*}
    As a consequence, we deduce
    \begin{align*}
        \frac{1}{\sqrt{p_1\dots p_h q_1\dots q_\ell n_1\dots n_k}} \frac{1}{\left| \log P(J) \right|} 
        \ll  \frac{\prod_{j\leq h} p_i \prod_{j\in J} n_j}{|h|\sqrt{p_1\dots p_h q_1\dots q_\ell n_1\dots n_k}} 
        = \frac{1}{|h|\sqrt{P(J)}} 
        \ll \frac{1}{|h|}  ,
    \end{align*}
    by \eqref{eq:QuotNear1}. Thus, we have that the left-hand side of \eqref{eq:QuotNear1STS} is bounded by
    \begin{align*}
        &\ll \sum_{0\neq |h| \leq T} \frac{1}{|h|} \sum_{\substack{p_1, \dots, p_h  \leq x\\ q_1, \dots, q_\ell\leq x \\ n_1, \dots, n_k  \leq T^\eta}} \mathbbm{1} \left( \prod_{i\leq h} p_i \prod_{j\in J} n_j = \prod_{i\leq \ell} q_i \prod_{j\in J^c} n_j+h \right)\\
        &\ll \sum_{\substack{q_1,\dots,q_\ell \leq x \\ n_j\leq T^\eta,\, j\in J^c}}  \sum_{0\neq |h| \leq T} \frac{1}{|h|} \sum_{\substack{p_1, \dots, p_h  \leq x \\ n_j \leq T^\eta, \, j\in J}} \mathbbm{1} \left( \prod_{i\leq h} p_i \prod_{j\in J} n_j = \prod_{i\leq \ell} q_i \prod_{j\in J^c} n_j+h \right).
    \end{align*}
    Now, for any given $(q_i)_{i\leq \ell}$, $(n_j)_{j\in J^c}$, and $h$, the number of possible $(p_i)_{i\leq h},(n_j)_{j\in J}$ such that the indicator function above equals 1 is certainly $\ll T^{\varepsilon}$, say. Thus, the last line in the above display is bounded by
    \begin{align*}
        &\ll T^{\varepsilon} \sum_{\substack{q_1,\dots,q_\ell \leq x \\ n_j\leq T^\eta,\, j\in J^c}}  \sum_{0\neq |h| \leq T} \frac{1}{|h|}
        \ll T^{2\varepsilon}  x^{\ell} T^{\eta |J^c|} 
        \ll T^{(\ell + 2)\varepsilon} T^{\eta |J^c|}.
    \end{align*}
    Since $|J^c|\leq \frac k2$ by assumption, taking $\varepsilon$ small enough, we find that the above is $\ll T^{1-\delta}$. This concludes the proof of \eqref{eq:QuotNear1STS}, and therefore that of \eqref{eq:MhkDiag}.
\end{proof}

As a final step, we analyze the combinatorics of the main term in \cref{prop:lkmoments} and show that the moments $M(h,\ell,k)$ are indeed those of a Gaussian.

\begin{prop}\label{PropMomentPart2alt}
    Let $T \ge 3$ be a (large) parameter and let $h,\ell,k$ be fixed nonnegative integers. Set $x = T^{\varepsilon}$ for some sufficiently small constant $\varepsilon = \varepsilon(h,\ell,k,\eta) > 0$. Then we have
    \begin{equation}\begin{split}\label{MhkDiagEvalAlt}
     M(h,\ell, k) = \mu_k \sigma_\phi^k \mathbbm{1}(h = \ell) h! \left(\log\log T\right)^{h} + O \left( (\log\log T)^{(h + \ell -1)/2} \right).
    \end{split}\end{equation}
\end{prop}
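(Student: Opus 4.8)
The plan is to analyse the diagonal sum on the right-hand side of \eqref{eq:MhkDiag} purely combinatorially and to show that its dominant part factors as a product of a ``$P_x$-contribution'' and a ``$\Starphi$-contribution''. The heuristic is that the only solutions of the multiplicative constraint $\prod_{i\le h}p_i\prod_{j\in J}n_j=\prod_{i\le\ell}q_i\prod_{j\in J^c}n_j$ producing the full power of $\log\log T$ are the ``bijective'' ones, in which the $p_i$ are a permutation of the $q_i$ (forcing $h=\ell$) while, independently, the $n_j$ with $j\in J$ are matched in pairs with the $n_j$ with $j\in J^c$; all other solutions lose at least a factor $\log\log T$ or a power of $1/\log T$. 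Concretely, I would first split the sum over $J$ and over the tuples according to the ``pairing graph'' of the underlying primes, isolate the bijective term, observe that on it the $P_x$-variables and the $\Starphi$-variables involve disjoint primes (the configurations with a shared prime being relegated to the error), and evaluate the two resulting independent sub-sums.

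For the $\Starphi$-sub-sum: fix a perfect matching $\mathcal M$ of $\{1,\dots,k\}$ into $m:=k/2$ pairs and, for each pair, a choice of which of its two elements lies in $J$ (so $|J|=m$ and every pair of $\mathcal M$ is split between $J$ and $J^c$); imposing $n_j=n_{j'}$ for each pair $\{j,j'\}\in\mathcal M$ satisfies $\prod_{j\in J}n_j=\prod_{j\in J^c}n_j$, and the corresponding sum factors as $\prod_{\{j,j'\}\in\mathcal M}\sum_{n\ge1}\frac{\Lambda^*(n)^2}{n}\hat\phi\big(\tfrac{\log n}{\log T}\big)^2$. I would evaluate the inner sum by separating $n=p$ from $n=p^2$: by partial summation and the Prime Number Theorem the prime part equals $(\log T)^2\int_0^\eta u\,\hat\phi(u)^2\,du+O(\log T)$, and — using $\eta<1$, which holds throughout our range, so that $\min(|u|,1)=|u|$ on $\supp\hat\phi$ — this is $\tfrac12(\log T)^2\sigma_\phi^2+O(\log T)$, while the square-of-prime part is $O(1)$. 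Hence each pair contributes $\tfrac12(\log T)^2\sigma_\phi^2(1+o(1))$, and summing over the $(k-1)!!=\mu_k$ matchings and the $2^m$ admissible choices of $J$, the powers of $\log T$ cancel the prefactor $\frac{(-1)^k}{(\log T)^k}$ (note $(-1)^k=1$) and leave $\mu_k\sigma_\phi^k+o(1)$. If $k$ is odd no such matching exists, in agreement with $\mu_k=0$.

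For the $P_x$-sub-sum and the remaining solutions: when $h=\ell$, a permutation of the $p_i$ onto the $q_i$ with all primes distinct contributes $h!\big(\sum_{p\le x}\tfrac1p\big)^h=h!(\log\log T)^h(1+o(1))$ by Mertens, the error in $\sum_{p\le x}\tfrac1p=\log\log T+O_\varepsilon(1)$ (with $x=T^\varepsilon$) affecting only lower-order terms. The crux is then the bound $O\big((\log\log T)^{(h+\ell-1)/2}\big)$ for everything else, and the mechanism is uniform: a factor $\log\log T$ arises exactly from a prime occurring to the first power in precisely one $p_i$ and one $q_{i'}$ and nowhere else, since only then does one get a free sum $\sum_{p\le x}\tfrac1p$; a prime shared with some $n_j$ contributes instead $\sum_{p\le T^\varepsilon}\tfrac{\log p}{p}=\varepsilon\log T+O(1)$, which is absorbed by the $1/\log T$ attached to that $n_j$, and a repeated prime contributes a convergent sum $\ll\sum_p p^{-2}<\infty$ (or a power of $1/\log T$ once squares of primes are involved). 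Therefore the number of $\log\log T$-factors produced by any solution is at most the number of $p$--$q$ pairs, hence at most $\min(h,\ell)$, with equality forcing $h=\ell$; when $k$ is odd the $n_j$ cannot all pair among themselves, so every solution either has a collision internal to the $\Starphi$-variables (which then contributes a convergent, hence $O(1)$, prime sum in place of a $(\log T)^2$ and is negligible) or has an $n_j$ tied to a $p_i$ or $q_{i'}$, lowering the count by one. Since $\min(h,\ell)\le\tfrac{h+\ell-1}{2}$ when $h\ne\ell$, since this count is at most $\min(h,\ell)-1\le\tfrac{h+\ell-1}{2}$ when $k$ is odd, and since for $h=\ell$ with $k$ even every non-bijective solution keeps at most $h-1\le\tfrac{h+\ell-1}{2}$ such factors, all non-main terms are $O\big((\log\log T)^{(h+\ell-1)/2}\big)$. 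Combining the two sub-sums gives \eqref{MhkDiagEvalAlt}, the case $h=\ell=0$ being immediate since the $P_x$-sub-sum is then simply $1$.

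The step I expect to be the main obstacle is this last one: turning the above dichotomy into a rigorous and uniform ``combinatorial lemma'' — of the type standard in moment computations of Dirichlet polynomials — that controls every configuration other than (a permutation of the $p_i$ onto the $q_i$) $\times$ (a perfect matching of the $n_j$). The bookkeeping is somewhat delicate precisely because the $P_x$-variables (primes up to $T^\varepsilon$) and the $\Starphi$-variables (prime powers up to $T^\eta$) are not supported on disjoint sets, so mixed configurations genuinely occur and each has to be estimated; the inputs are Mertens' theorem, the Prime Number Theorem together with partial summation (for the constant $\tfrac12\sigma_\phi^2$), the hypothesis $\eta<1$, and the freedom to choose $\varepsilon=\varepsilon(h,\ell,k,\eta)$ small enough that each short prime sum $\sum_{p\le T^\varepsilon}\tfrac{\log p}{p}$ contributes only $O_\varepsilon(\log T)$ and never more.
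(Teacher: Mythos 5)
Your proposal is correct and takes essentially the same route as the paper: identify the bijective configuration (a permutation of the $p_i$ onto the $q_i$ together with a perfect matching of the $n_j$ split across $J$ and $J^c$) as the sole source of the full power $(\log\log T)^{\max(h,\ell)}$, and show that every other pairing loses at least a factor of $\sqrt{\log\log T}$ or a power of $1/\log T$. The ``combinatorial lemma'' you flag as the delicate step is precisely what the paper makes explicit by enumerating the ten possible binary and ternary relations among prime-power variables and bounding the corresponding one-relation sums $S_1,\dots,S_{10}$ individually; your matching count $(k-1)!!\cdot 2^{k/2}$ equals the paper's $k!/(k/2)!$, and your evaluation of the pair sum using $\eta<1$ matches the paper's $S_4=\sigma_\phi^2/2+O(1/\log T)$.
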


\begin{proof}
By \cref{prop:lkmoments}, it suffices to show that
\begin{equation}\begin{split}\label{MhkdiagSTS}
    \frac{(-1)^k}{(\log T)^k} 
        \sum_{\substack{J\subseteq \{1, \dots, k\}\\p_1, \dots, p_h\leq x\\
        q_1, \dots, q_\ell\leq x}} &\frac{\Lambda^*(n_1) \cdots \Lambda^*(n_k)}{\sqrt{p_1\cdots p_h q_1\dots q_\ell n_1 \cdots n_k}} \prod_{j=1}^k  \hat\phi\left(\frac{\log n_j}{\log T}\right) \mathbbm{1} \left( \prod_{i\leq h} p_i \prod_{j \in J} n_j = \prod_{i\leq \ell} q_i \prod_{j\in J^c} n_j \right)\\
        &= \mu_k \sigma_\phi^k \mathbbm{1}(h = \ell) h! \left( \log\log T \right)^{h} + O \left( (\log\log T)^{(h + \ell -1)/2} \right).
\end{split}\end{equation}
    Recall that we may assume all the $n_j$ to be either be primes or their squares. From the equation 
    \[ \prod_{i\leq h} p_i \prod_{j \in J} n_j = \prod_{i\leq \ell} q_i \prod_{j\in J^c} n_j  \]
    it therefore follows that each of the variables has to be in one of the relations
    \begin{equation}\label{2PR}
        p_i = q_{i'},\quad n_j = q_i, \quad p_i = n_j, \quad n_j = n_{j'} \qquad (j\in J, j'\in J^c)
    \end{equation}
    involving two prime powers, or one of the relations
    \begin{equation}\label{3PR}
        p_i p_{i'} = n_j, \quad p_i n_j = n_{j'}, \quad n_j n_{j'} = n_{j''}, \quad n_j = q_i q_{i'}, \quad n_j = q_i n_{j'}, \quad n_j = n_{j'} n_{j''}
    \end{equation}
    involving three prime powers. Here, a variable $n_j$ on the left-hand side refers to an index $j \in J$ and on the right-hand side it refers to an index $j \in J^c$. 
    As a consequence, for any choice of $J$, the left-hand side of \eqref{MhkdiagSTS} factors as a product of the individual sums corresponding to each of the $10$ relations \eqref{2PR} and \eqref{3PR}, which we refer to in their order by $S_1,\dots, S_{10}$. For example, the sum corresponding to the equality $p_i = n_j =: p$ would be
    \[ S_3 = -\frac{1}{\log T} \sum_{p \le x} \frac{\Lambda(p)}{p} \hat \phi \left( \frac{\log p}{\log T} \right). \]
    Note that we incorporate a factor $-1/\log T$ in the sum for each occurrence of a variable $n_j$ in the relation.
    
    Let us denote further by $u:= (u_1,\dots,u_{10})$ the number of occurrences of each of the $10$ relations.
    We can therefore write the left-hand side of \eqref{MhkdiagSTS} as
    \[ = \sum_{u,J} C(u,J) \prod_{i=1}^{10} S_i^{u_i}, \]
    where the $C(u,J)$ count the number of ways to select indices with the given number of relations $u$ of each type. As we will see, there is no need to figure out the values of these constants exactly except in the special case when the $p_i$ pair up perfectly with the $q_i$ and the $n_j$ pair up perfectly among themselves, or in other words, when $u_i = 0$ for all $i$ except $i = 1$ and $i = 4$.

    One verifies through elementary summation by parts arguments involving the Prime Number Theorem that we have
    \begin{equation}\begin{split}
        S_1 &= \sum_{p \le x} \frac{1}{p} = \log \log T + O(1), \\
        S_4 &= \frac{1}{(\log T)^2} \sum_{n \ge 1} \frac{\Lambda^*(n)^2}{n} \hat \phi^2 \left( \frac{\log n}{\log T} \right) = \frac{\sigma_\phi^2}{2} + O \left( \frac{1}{\log T} \right),\\
        S_2, S_3 &\ll 1 \qquad \text{ and } \qquad S_i \ll \frac{1}{\log T} \qquad (i = 5,\dots, 10).
    \end{split}\end{equation}

    We can thus deduce that the left-hand side of \eqref{MhkdiagSTS} is $\ll (\log \log T)^{u_1}$, and we must evidently have $u_1 \le \min \{h,\ell\}$ since it is the number of relations $p_i = q_j$. This can be absorbed into the error term unless $h = \ell = u_1$, which we assume from now on. As a consequence, the $n_j$ must mix solely among themselves, or in other words, we must have $u_2 = u_3 = u_5 = u_6 = u_8 = u_9 = 0$. From this, we deduce that the left-hand side of \eqref{MhkdiagSTS} is in fact
    \[ \ll \frac{(\log \log T)^h}{(\log T)^{u_7+u_{10}}}, \]
    which may again be absorbed into the error term unless $u_7 = u_{10} = 0$. This forces $u_4 = k/2$, from which we can further assume $k$ to be even. Hence, the $n_j$ variables have to match perfectly among themselves, and we must have $|J| = k/2$.

    Now the number of ways of selecting indices to satisfy the relations $p_i = q_j$ is simply $h! = \ell!$. The number of matchings of sets $J$ and $J^c$ is $\frac{k!}{(k/2)!}$. 
    Thus, we may conclude that the left-hand side of \eqref{MhkdiagSTS} is in fact
    \begin{align*} &= h! S_1^h \frac{k!}{(k/2)!} S_4^{k/2} + O \left( (\log\log T)^{(h + \ell -1)/2} \right) \\ 
    &= \mu_k \sigma_\phi^k \mathbbm{1}(h = \ell) h! \left(\log\log T\right)^{h} + O \left( (\log\log T)^{(h + \ell -1)/2} \right).
    \end{align*}
\end{proof}

\begin{rem}\label{RemarkHRsharp}
    If $h = \ell = 0$, then the only relations appearing in \eqref{2PR} and \eqref{3PR} are the ones in the $n_j$ variables. As a consequence, the main term continues to correspond to the case where the $n_j$ pair up perfectly, and has the same shape. As for the error term, one only has contributions from $u_7 > 0$ and $u_{10} > 0$, and the corresponding sums are both $\ll (\log T)^{-1}$. Hence, one deduces from an inspection of the proof that for any $k$ we have
    \begin{equation}
        M(0,0,k) = \frac{1}{T} \int_T^{2 T} S_\phi^*(t)^k \, dt = \mu_k \sigma_\phi^k + O \left( \frac{1}{\log T} \right). 
    \end{equation}
    This recovers the result of Hughes-Rudnick \cite{HughesRudnick} with the sharp cutoff $\omega = \mathbbm{1}_{[0,1]}$, recalling that switching between $S_\phi^*$ and $S_\phi$ is immediate by \eqref{SS*Comp}.
\end{rem}

\subsection*{Proof of \texorpdfstring{\cref{mainthm}}{mainthm} and \texorpdfstring{\cref{mainthm2}}{mainthm2} }

\cref{mainthm} follows immediately by combining \cref{PropApprox} and \cref{PropMomentPart2alt}. We now assume the Riemann Hypothesis and prove \cref{mainthm2}. Using \cref{PropApproxUnderthe Riemann Hypothesis} and \cref{PropMomentPart2alt} in the form of \cref{Remark_relativeerror}, one has
\begin{align*}
    \frac{1}{T} \int_T^{2T} (\Im\log\zeta(\tfrac{1}{2}+it))^\ell (N_\phi(t)-\hat\phi(0))^k dt 
        &= \frac{1}{T}\int_T^{2T} (\Im P_x(t))^\ell \Starphi(t)^k dt 
        +O\left( (\log\log T)^{(\ell-1)/2} \right).
\end{align*}
By \cref{PropMomentPart2alt}, the main term above is
\begin{align*}
    \frac{1}{T}\int_T^{2T} (\Im P_x(t))^\ell \Starphi(t)^k dt 
    &= \frac{1}{(2i)^\ell} \sum_{a+b=\ell} \frac{\ell! (-1)^b}{a! b!} M(a,b,k) \\
    &= \frac{\mu_k \sigma_\phi^k}{(2i)^\ell} \sum_{a+b=\ell} \frac{\ell! (-1)^b}{b!}  \mathbbm{1}(a = b) \left(\log\log T\right)^{a} + O \left( (\log\log T)^{(\ell -1)/2} \right) \\
    &= \mu_k \sigma_\phi^k \frac{\ell!}{2^\ell(\ell/2)!}  \mathbbm{1}(\ell \text{ even})  \left(\log\log T\right)^{\ell/2} + O \left( (\log\log T)^{(\ell -1)/2} \right) 
\end{align*}
and \cref{mainthm2} is proven.
\qed

\subsection*{Proof of \texorpdfstring{\cref{cor1}}{cor1} and \texorpdfstring{\cref{cor2}}{cor2} }

Both the results follow from an application of the method of moments \cite[Section 30]{Billingsley}, see also \cite[Section B.5]{Kowalski1}. By \eqref{24may.1} and \cref{ExplForm_SumOverZeros}, one sees that $N_\phi(t) - \hat\phi(0)$ is approximately real; the error term is of size $O(1/\log T)$ and can therefore be neglected. Since $k$ is even, the probability measure $d\mu_{\phi,k}(t)$ is then well-approximated by $(N_\phi(t) - \hat\phi(0))^k \frac{dt}{C_{\phi,k}}$. Therefore, \cref{cor1} (respectively, \cref{cor2}) follows from the moment estimates given by \cref{mainthm} (respectively, \cref{mainthm2}), since $C_{\phi,k}\sim \mu_k \sigma_\phi^k$ by \cref{RemarkHRsharp}.
\qed

\bibliographystyle{plainnat}
\bibliography{bibliography}

\end{document}